\newtheorem{thm}{Theorem}
\newtheorem{prop}{Proposition}[section]
\newtheorem{lem}[prop]{Lemma}
\theoremstyle{remark}
\newtheorem{rem}{Remark}[section]
\DeclareMathOperator{\bR}{\mathbb{R}}
\DeclareMathOperator{\bZ}{\mathbb{Z}}
\DeclareMathOperator{\cC}{\mathcal{C}}
\DeclareMathOperator{\Diff}{\operatorname{Diff}}
\DeclareMathOperator{\Ham}{\operatorname{Ham}}
\DeclareMathOperator{\Homeo}{\operatorname{Homeo}}
\DeclareMathOperator{\PFC}{\text{PFC}}
\DeclareMathOperator{\PFH}{\text{PFH}}
\DeclareMathOperator{\Hom}{\operatorname{Hom}}
\title{Periodic points of rational area-preserving homeomorphisms}
\author{Rohil Prasad}
\date{}
\begin{document}

\maketitle

\begin{abstract}
    An area-preserving homeomorphism isotopic to the identity is said to have rational rotation direction if its rotation vector is a real multiple of a rational class. We give a short proof that any area-preserving homeomorphism of a compact surface of genus at least two, which is isotopic to the identity and has rational rotation direction, is either the identity or has periodic points of unbounded minimal period. This answers a question of Seyfaddini and can be regarded as a Conley conjecture-type result for symplectic homeomorphisms of surfaces beyond the Hamiltonian case. We also discuss several variations, such as maps preserving arbitrary Borel probability measures with full support, maps not isotopic to the identity, and maps on lower genus surfaces. The proofs of the main results combine topological arguments with periodic Floer homology. 
\end{abstract}

\section{Introduction}

% theorems with letters
\renewcommand*{\thethm}{\Alph{thm}}

\subsection{History and main results} Questions on the existence and multiplicity of periodic points of area-preserving surface homeomorphisms have a long history, dating back to Poincar\'e and Birkhoff's work on annulus twist maps and the restricted planar three-body problem, and have attracted significant attention since then. The existence question asks whether any periodic points exist, and the multiplicity question asks how many there are. On a closed surface of genus $g \geq 1$, area-preserving maps with any finite number $N \geq 2g - 2$ of periodic points may be constructed by adding $N - 2g + 2$ singularities to an irrational translation flow ($g = 1$) or a translation flow in any minimal direction ($g \geq 2$). The resulting time-one maps have finitely many periodic points, all of which are fixed, and are isotopic to the identity relative to their fixed point set. The current state-of-the-art, to our knowledge, is due to Le Calvez \cite{lecalvez}. He shows, barring some edge cases when $g \leq 1$, that any area-preserving homeomorphism of a closed surface is either periodic, has periodic points of unbounded minimal period, or after passing to an iterate only has finitely many fixed points and is isotopic to the identity relative to the fixed point set. The last case is the one modeled by the example above. 

In this note, we discuss a simple and dense homological condition which forces an area-preserving map isotopic to the identity to have infinitely many periodic points. Fix a compact surface $\Sigma$ and a smooth area form $\omega$. Any area-preserving map $\phi \in \Homeo_0(\Sigma, \omega)$ isotopic to the identity has a \emph{rotation vector} 
$$\mathcal{F}(\phi) \in H_1(\Sigma; \mathbb{R})\,/\,\Gamma_\omega$$ where $\Gamma_\omega \subseteq H_1(\Sigma; \mathbb{Z})$ is a discrete subgroup. The map $\phi$ is said to have \emph{rational rotation direction} if $\mathcal{F}(\phi)$ is a real multiple of a rational class, i.e. $c \cdot \mathcal{F}(\phi) \in H_1(\Sigma; \mathbb{Q})\,/\,\Gamma_\omega$ for some real number $c > 0$. Any area-preserving map can be perturbed to an area-preserving map with rational rotation direction by a $C^\infty$-small perturbation, so a dense subset of maps have rational rotation direction. We now state our main result. 

\begin{thm}\label{thm:main}
    Fix a compact surface $\Sigma$ of genus $\geq 2$ and a smooth area form $\omega$. Let $\phi \in \Homeo_0(\Sigma, \omega)$ be any area-preserving homeomorphism which is isotopic to the identity and rational. Then $\phi$ is either the identity or has periodic points of unbounded minimal period. 
\end{thm}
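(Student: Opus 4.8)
I would prove the contrapositive: assuming $\phi\neq\mathrm{id}$ and that every periodic point of $\phi$ has minimal period at most some integer $N$, I will derive a contradiction. First I replace $\phi$ by $\psi:=\phi^{N!}$. This is again area-preserving and isotopic to the identity, its rotation vector equals $N!\cdot\mathcal F(\phi)$ so it still has rational rotation direction, and every periodic point of $\psi$ is a fixed point. A finite-order element of $\Homeo_0(\Sigma,\omega)$ is trivial when $\Sigma$ has genus $\geq 2$, so if $\psi=\mathrm{id}$ then $\phi$ would be finite order and hence the identity; thus $\psi\neq\mathrm{id}$. Now I apply Le Calvez's classification quoted above to $\psi$: it is not periodic (a periodic homeomorphism isotopic to the identity on a surface of genus $\geq 2$ is the identity) and by hypothesis has no periodic points of unbounded minimal period, so after passing to a further iterate, again renamed $\psi$, the map $\psi$ has finitely many fixed points, its set of periodic points equals $X:=\mathrm{Fix}(\psi)$, and it is isotopic to the identity relative to $X$.

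Because $\psi$ is isotopic to the identity relative to the finite set $X$, the mapping torus $Y_\psi\to S^1$ is fibrewise diffeomorphic to $\Sigma\times S^1$ in such a way that each periodic orbit $\{p\}\times S^1$, $p\in X$, corresponds to a constant section. Hence every periodic orbit of $\psi$ represents the class $(0,1)$ in $H_1(Y_\psi)=H_1(\Sigma)\oplus\mathbb Z$, and in particular has vanishing component along $\mathcal F(\psi)\in H_1(\Sigma)$. Write $\mathcal F(\psi)=c\cdot a$ with $c>0$ and $a\in H_1(\Sigma;\mathbb Q)/\Gamma_\omega$. If $a=0$ then $\Flux(\psi)=0$, so $\psi$ is Hamiltonian and the conclusion is the Conley conjecture for Hamiltonian homeomorphisms of higher-genus surfaces, which is known. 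So from now on I assume $a\neq 0$.

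The remaining case is where periodic Floer homology enters, and it is here that \emph{rationality} of the rotation direction is indispensable. Because $a$ is rational, the twisted periodic Floer homology $\TWPFH$ of $\psi$ associated to $a$ can be set up (the relevant local system is periodic; for irrational directions one expects $\TWPFH$ to vanish, as the translation-flow examples of the introduction suggest), its spectral invariants are $C^0$-continuous in $\psi$ and so extend from area-preserving diffeomorphisms to homeomorphisms, it is nonzero in an infinite sequence of degrees $d$, and its spectral invariants $c_d(\psi)$ obey a Weyl law, with $c_d(\psi)/d$ converging as $d\to\infty$. Crucially, the homology classes and degrees supporting $\TWPFH(\psi)$ are those whose component along $a$ grows linearly in $d$. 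Feeding the Weyl law, the $C^0$-continuity, and a closing-type argument into the structure of $\TWPFH$ then forces $\psi$ to carry a periodic orbit whose homology class has nonzero component in the direction of $a$ — contradicting the previous paragraph, and completing the proof.

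The hard part is this last step: building twisted periodic Floer homology and its spectral invariants for a map that is merely a homeomorphism, proving their $C^0$-continuity, and establishing the twisted Weyl law together with the identification of which homology classes support $\TWPFH$ — recognizing all the while that rationality of the rotation direction is precisely the nondegeneracy hypothesis that makes the theory work. The topological reduction in the first two paragraphs is comparatively routine, although pinning down the statement ``finitely many fixed points and isotopic to the identity relative to them'' requires some care with Le Calvez's theory of maximal isotopies and transverse foliations.
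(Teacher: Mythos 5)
Your topological reduction in the first two paragraphs is essentially sound: passing to the iterate $\psi=\phi^{N!}$, invoking Le Calvez's trichotomy, and observing that an isotopy to the identity relative to the finite set $X=\mathrm{Fix}(\psi)$ forces every periodic point of $\psi$ to be contractible. The Hamiltonian case $a=0$ is indeed disposed of by \cite{lecalvez_ham}. At this point you have correctly reduced the theorem to showing that a non-Hamiltonian, rational $\psi\in\Homeo_0(\Sigma,\omega)$ cannot have \emph{only} contractible periodic points — which is precisely Theorem~\ref{thm:non_contractible} in the paper (read contrapositively).

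The gap is in how you propose to prove this. Your final paragraph is not a proof but a research program: a ``twisted periodic Floer homology'' $\TWPFH$ for rational local systems, $C^0$-continuous spectral invariants for this theory, a twisted Weyl law, and an unspecified ``closing-type argument'' to pin down which homology classes appear. None of these objects or results are developed or cited, and setting them up — in particular $C^0$-continuity of spectral invariants for non-Hamiltonian area-preserving maps, so that the theory applies to homeomorphisms — would be a substantial project in its own right. In contrast, the paper uses only the already-proven non-vanishing of ordinary $\PFH$ from \cite{closing}, which requires the stronger hypothesis $\mathcal{F}(\phi)\in H_1(\Sigma;\mathbb{Q})$ (not merely rational direction). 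The two obstructions you are implicitly trying to overcome — rational direction is weaker than rational vector, and PFH only exists for diffeomorphisms — are handled in the paper by much more elementary means: (i) a blow-up trick, in which a contractible fixed point (guaranteed since $g\geq 2$) is replaced by a disk of a freely chosen area, rescaling the normalized rotation vector from a rational multiple of $h$ to an actual rational class, with no new non-contractible periodic points created; and (ii) a \emph{quantitative} version of non-contractibility (Proposition~\ref{prop:non_contractible_technical}) that bounds the minimal period of the non-contractible point depending only on the rotation vector, so that $C^0$-approximation of a homeomorphism by area-preserving diffeomorphisms with the same rotation vector passes the conclusion to the limit. Without these two devices, or some concrete substitute for the twisted-Floer package you invoke, the last step of your argument does not go through.
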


\begin{rem}
In the theorem above and all subsequent discussion, we allow compact surfaces to have nonempty boundary, and say they are \emph{closed} when the boundary is empty. The genus of a compact surface is the genus of the closed surface obtained by attaching disks to each boundary component. All surfaces are also assumed to be oriented from now on. 
\end{rem}

\begin{rem}
Similar results are either known or can be shown to hold by combining known results for compact surfaces of genus $0$ and $1$. See Section~\ref{subsec:lower_genus} for a more detailed discussion. 
\end{rem}

\begin{rem}
Franks--Handel \cite{franks_handel} and Le Calvez \cite{lecalvez_ham} previously proved Theorem~\ref{thm:main} under the assumption that $\phi$ is Hamiltonian, which is equivalent to the condition $\mathcal{F}(\phi) = 0$.
\end{rem}

\begin{rem} \label{rem:infinitely_many}
Fix a compact surface $\Sigma$ of genus $\geq 2$ and a smooth area form $\omega$. Theorem~\ref{thm:main} and a short Baire category argument show that a $C^\infty$-generic $\phi \in \Diff_0(\Sigma, \omega)$ has periodic points of unbounded minimal period. Any periodic point can be made nondegenerate by a $C^\infty$-small local Hamiltonian perturbation, which does not change the rotation vector. Since rational maps are $C^\infty$-dense, it follows from Theorem~\ref{thm:main} that for each $d$ there is an open and dense subset $\mathcal{U}_d \subset \Diff_0(\Sigma, \omega)$ such that each $\phi \in \mathcal{U}_d$ has a periodic point of minimal period $\geq d$. Each $\phi$ in the intersection $\mathcal{U} := \cap_{d \geq 1} \mathcal{U}_d$ has periodic points of unbounded minimal period. 
\end{rem}

In addition to the historical backdrop presented above, Theorem~\ref{thm:main} fits into an active stream of research in symplectic dynamics centered around the \emph{Conley conjecture}. The original formulation of the conjecture asserts that any Hamiltonian diffeomorphism of a closed aspherical symplectic manifold has infinitely many periodic points. The Conley conjecture was resolved for surfaces by Franks--Handel \cite{franks_handel} and extended to Hamiltonian homeomorphisms by Le Calvez \cite{lecalvez_ham} before being resolved in full generality by breakthrough work of Hingston \cite{Hingston09} for higher-dimensional tori and Ginzburg \cite{Ginzburg10} for the general case. The search for extensions of the Conley conjecture to Hamiltonian diffeomorphisms/homeomorphisms of more general symplectic manifolds has attracted a great deal of ongoing activity and progress \cite{GG09,GG09b,GG10,GG12,Gurel13,GG14,GG15,GG16,Cineli18,GG19}. However, there has been much less progress in establishing Conley conjecture-type results for non-Hamiltonian symplectic maps (some notable results include \cite{Batoreo15,Batoreo17,Batoreo18}). Moreover, to our knowledge, there is no agreed upon formulation of the Conley conjecture for non-Hamiltonian symplectic maps. Theorem~\ref{thm:main} can be viewed not only as establishing such a ``non-Hamiltonian Conley conjecture'' in dimension $2$, but also as a guidepost towards formulating a ``non-Hamiltonian Conley conjecture'' for higher dimensional symplectic manifolds. We pose the following question. 
\vspace{.1in}

\noindent\textbf{Question:} Let $(M, \Omega)$ be a closed and symplectically aspherical symplectic manifold of any dimension. Then does any symplectic diffeomorphism $\phi \in \Diff_0(M, \Omega)$ such that $\mathcal{F}(\phi) \in H_1(M; \mathbb{Q})\,/\,\Gamma_\Omega$ have infinitely many periodic points?

\vspace{.1in}
The idea that a Conley conjecture-type result may hold for area-preserving homeomorphisms with rational rotation direction was motivated by recent work on the $C^\infty$-closing lemma \cite{closing, closing_eh} for area-preserving surface diffeomorphisms.  Herman famously showed \cite[Chapter $4.5$]{hz_book} that a version of the closing lemma using only Hamiltonian perturbations cannot hold for certain irrational maps (Diophantine torus rotations). This issue was avoided by proving a Hamiltonian $C^\infty$-closing lemma for many rational maps\footnote{Any map with rational asymptotic cycle (see Section~\ref{subsec:intro_non_identity}).} and then observing that such maps form a $C^\infty$-dense subset of $\Diff(\Sigma, \omega)$. It seems reasonable to suspect that, given these marked differences in behavior, the rationality condition has dynamical significance. Further evidence that rationality may force infinitely many periodic points is furnished by the fact that the area-preserving maps with finitely many periodic points discussed above all have irrational rotation vector. We also learned while preparing the first version of this paper that Theorem~\ref{thm:main} was also posed as a question, with the same motivation, by Sobhan Seyfaddini.

Theorem~\ref{thm:main} can also be extended, with slightly weaker conclusions, to maps preserving arbitrary full-support Borel probability measures. Given an isotopy $\Phi$ from the identity to $\phi$ and a $\phi$-invariant Borel probability measure $\mu$, a rotation vector $\mathcal{F}(\Phi, \mu) \in H_1(\Sigma; \bR)$ can be defined. When $\Sigma$ has genus $\geq 2$, the rotation vector does not depend on the isotopy $\Phi$, and we write it as $\mathcal{F}(\phi, \mu)$. 

\begin{thm}\label{thm:extension}
Fix a compact surface $\Sigma$ of genus $\geq 2$ and a Borel probability measure $\mu$ with full support such that $\mu(\partial \Sigma) = 0$. Let $\phi \in \Homeo_0(\Sigma, \mu)$ be any $\mu$-preserving homeomorphism which is isotopic to the identity, such that its rotation vector $\mathcal{F}(\phi, \mu) \in H_1(\Sigma; \bR)$ is a real multiple of a rational class. Then $\phi$ has infinitely many periodic points. Moreover, if $\mu$ has no atoms, then $\phi$ is either the identity or has periodic points of unbounded minimal period. 
\end{thm}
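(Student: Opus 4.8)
The plan is to reduce the measure-theoretic statement to the smooth case, Theorem~\ref{thm:main}, by an approximation argument in the spirit of the Oxtoby--Ulam theorem. First I would invoke Oxtoby--Ulam: since $\mu$ is a full-support, atomless (in the second part of the statement) Borel probability measure on $\Sigma$ with $\mu(\partial\Sigma)=0$, there is a homeomorphism $h$ of $\Sigma$, isotopic to the identity and fixing $\partial\Sigma$ setwise, carrying $\mu$ to a smooth area form $\omega$ (suitably normalized). Conjugating, $\psi := h\phi h^{-1}$ lies in $\Homeo_0(\Sigma,\omega)$ and is isotopic to the identity, and conjugation is a bijection on periodic points preserving minimal period, so it suffices to treat $\psi$. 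The rotation vector is a conjugacy invariant in the appropriate sense: since $h$ is isotopic to the identity, $h_*$ acts trivially on $H_1(\Sigma;\bR)$, so $\mathcal{F}(\psi,\omega) = h_*\mathcal{F}(\phi,\mu) = \mathcal{F}(\phi,\mu)$ is still a real multiple of a rational class. Now $\psi$ is area-preserving, isotopic to the identity, with rational rotation direction — but it is only a homeomorphism, not a diffeomorphism, so I apply Theorem~\ref{thm:main} (which is stated for $\Homeo_0$, not $\Diff_0$) directly: either $\psi$ is the identity, in which case $\phi$ is the identity, or $\psi$ has periodic points of unbounded minimal period, hence so does $\phi$. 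This gives the second, stronger conclusion whenever $\mu$ is atomless.

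For the first conclusion — infinitely many periodic points when $\mu$ may have atoms — I would split off the atomic part. Write $\mu = \mu_{\mathrm{at}} + \mu_{\mathrm{cont}}$ where $\mu_{\mathrm{at}}$ is the (countable) sum of atoms and $\mu_{\mathrm{cont}}$ is atomless; both are $\phi$-invariant. The support of $\mu_{\mathrm{at}}$ is a $\phi$-invariant set, and each atom has a well-defined $\mu$-mass, so the atoms of a given mass form a finite $\phi$-invariant set; $\phi$ permutes them, producing periodic points. If there are atoms of infinitely many distinct masses, or infinitely many atoms of some fixed mass, one already gets infinitely many periodic points directly. Otherwise there are only finitely many atoms total, $\phi$ permutes them and some iterate $\phi^k$ fixes all of them; then I would remove small $\phi^k$-invariant disks around the atoms (or work on the complement) and apply the atomless case to the restriction of $\mu_{\mathrm{cont}}$, normalized, on the resulting surface — which still has genus $\geq 2$ — to produce infinitely many periodic points of $\phi^k$, hence of $\phi$. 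A cleaner alternative: since $\mu$ has full support, the continuous part $\mu_{\mathrm{cont}}$ is nonzero, and if it also has full support one can run an Oxtoby--Ulam-type argument respecting the finite invariant atom set. Either way the atomic complications are handled by a finite-combinatorics plus restriction argument and do not interact with the Floer-theoretic core.

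The main obstacle I anticipate is entirely in the measure-theoretic normalization step, not the dynamics: one must ensure the Oxtoby--Ulam homeomorphism $h$ can be taken isotopic to the identity (so that $h_*$ is trivial on homology and the rationality hypothesis transports correctly) and, in the presence of atoms, compatible with the finite invariant set of atoms — i.e. a relative or equivariant version of Oxtoby--Ulam on surfaces with boundary and with $\mu(\partial\Sigma)=0$. The boundary hypothesis $\mu(\partial\Sigma)=0$ is exactly what is needed to push all the mass into the interior and realize it as a smooth area form; one should check the standard references (Oxtoby--Ulam, von Neumann, and the surface-with-boundary refinements) give $h$ in the identity component and fixing the boundary, which is where I would expect to spend the most care. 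Once $\psi \in \Homeo_0(\Sigma,\omega)$ is produced, Theorem~\ref{thm:main} is a black box and the proof concludes immediately.
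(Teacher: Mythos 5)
Your first paragraph (the atomless case) is essentially the paper's argument: apply Oxtoby--Ulam to conjugate $\mu$ to a smooth area measure and feed the conjugated map into Theorem~\ref{thm:main}; note that you do not even need $h$ isotopic to the identity, since any homeomorphism sends $H_1(\Sigma;\bZ)$ to itself, so ``real multiple of a rational class'' is preserved by $h_*$, and $h\phi h^{-1}$ is automatically in $\Homeo_0$.

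The atomic case, however, has a genuine gap, in two places. First, the step ``remove small $\phi^k$-invariant disks around the atoms'' is unjustified: a homeomorphism fixing a point need not preserve any disk around it, and working on the non-compact complement of the atoms is not something Theorem~\ref{thm:main} (or your atomless case) applies to. Second, and more fundamentally, even if you could isolate the atoms, you cannot ``apply the atomless case to $\mu_{\mathrm{cont}}$,'' because the hypothesis of rational rotation direction is about $\mu$, not about $\mu_{\mathrm{cont}}$. Writing $\mu = t\mu_0 + (1-t)\mu_1$ with $\mu_0$ atomless and $\mu_1$ purely atomic (both $\phi$-invariant), one has $\mathcal{F}(\phi,\mu) = t\,\mathcal{F}(\phi,\mu_0) + (1-t)\,\mathcal{F}(\phi,\mu_1)$, and a finite invariant set of atoms can carry a nonzero rotation vector pointing in an arbitrary (irrational) direction; then $\mathcal{F}(\phi,\mu_0)$ need not be a multiple of a rational class and the reduction collapses. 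The paper's proof handles exactly this by a trichotomy that your proposal is missing: if $\mu_1$ has infinite support, the atoms already give infinitely many periodic points; if $\mathcal{F}(\phi,\mu_1)\neq 0$, then some atom is a \emph{non-contractible} periodic point, and the forcing argument of \cite[Section 4]{lecalvez} (the same one used in the proof of Theorem~\ref{thm:main}, which needs only an invariant measure of full support) yields periodic points of unbounded minimal period with no rationality input at all; only in the remaining case, $\mu_1$ finite with zero rotation vector, does one conclude that $\mathcal{F}(\phi,\mu_0)$ is proportional to $\mathcal{F}(\phi,\mu)$, hence of rational direction, and then apply Oxtoby--Ulam \cite{oxtoby_ulam} and Theorem~\ref{thm:main} to $\mu_0$ on the original surface (no disks removed). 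To repair your write-up you should replace the disk-removal/restriction step by this rotation-vector case analysis.
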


Theorem~\ref{thm:extension} follows from a short argument combining Theorem~\ref{thm:main} and a theorem of Oxtoby--Ulam \cite{oxtoby_ulam}, which was suggested to the author by Patrice Le Calvez. Theorem~\ref{thm:main} follows from Theorem~\ref{thm:non_contractible}, which may be of independent interest, and the results in \cite{lecalvez_ham, lecalvez}. 

\begin{thm} \label{thm:non_contractible}
Fix a closed surface $\Sigma$ of genus $\geq 2$ and a smooth area form $\omega$. Then any rational $\phi \in \Homeo_0(\Sigma, \omega)$ is either Hamiltonian or has a non-contractible periodic point. 
\end{thm}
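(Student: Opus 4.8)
The plan is to derive Theorem~\ref{thm:non_contractible} from a nonvanishing theorem in twisted periodic Floer homology, after a soft reduction to the case of nondegenerate diffeomorphisms. Suppose $\phi \in \Homeo_0(\Sigma,\omega)$ is rational and not Hamiltonian. Since $\Sigma$ has genus $\geq 2$ its flux group $\Gamma_\omega$ is trivial, so the rotation vector $\mathcal{F}(\phi)$ is an honest nonzero element of $H_1(\Sigma;\mathbb{R})$, and by hypothesis it is a real multiple of a primitive integral class $\rho$. Fixing an isotopy from the identity to $\phi$, every periodic point $x$ of $\phi$ acquires an \emph{orbit loop} in $\Sigma$, and I will in fact produce one whose orbit loop is non-nullhomologous, hence non-contractible. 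First I would reduce to $\phi$ smooth and nondegenerate: smoothing $\phi$, correcting it to be area-preserving by Moser's trick, and perturbing slightly, one gets nondegenerate $\psi_n \in \Diff_0(\Sigma,\omega)$ with $\psi_n \to \phi$ in $C^0$; since the rotation vector is $C^0$-continuous, composing each $\psi_n$ with a $C^0$-small flow (and perturbing once more) arranges that $\mathcal{F}(\psi_n)$ is a nonzero real multiple of the same $\rho$ while keeping $\psi_n \to \phi$ in $C^0$. Granting the nondegenerate case \emph{together with a period bound $d$ depending only on $(\Sigma,\omega,\rho)$}, each $\psi_n$ has a non-nullhomologous periodic point $x_n$ of period $\leq d$; passing to a subsequence $x_n \to x$ with stabilized period $k \leq d$, the convergence of the maps and isotopies shows $x$ is a period-$k$ point of $\phi$, and its orbit loop is a uniform limit of those of the $x_n$, which all represent one fixed nonzero class for $n$ large. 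So it suffices to treat nondegenerate $\phi \in \Diff_0(\Sigma,\omega)$.

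For such $\phi$, form the mapping torus $Y_\phi$, which is diffeomorphic to $\Sigma \times S^1$ because $\phi$ is isotopic to the identity, with $H_1(Y_\phi;\mathbb{Z}) \cong H_1(\Sigma;\mathbb{Z}) \oplus \mathbb{Z}$, the $\mathbb{Z}$ summand recording the degree of a class over $S^1$. Periodic Floer homology in a class $\Gamma = (A,d)$ requires a monotonicity condition relating $\Gamma$ to the closed two-form on $Y_\phi$ built from $\omega$ and the isotopy, whose periods encode the total area and the flux $\mathcal{F}(\phi)$; because $\mathcal{F}(\phi) \neq 0$, monotonicity can hold only if $A$ is a nonzero multiple of $\rho$ and, in addition, the magnitude of $\mathcal{F}(\phi)$ is rationally related to the area — a constraint the hypothesis does not supply. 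This is exactly the role of the twisted theory $\TWPFH$: using only that the rotation \emph{direction} is rational, I would fix, depending only on $(\Sigma,\omega,\rho)$, one large $d$ and one nonzero $A$ proportional to $\rho$ for which $\TWPFH(Y_\phi,\Gamma)$ is defined, and then invoke the Lee--Taubes-type isomorphism with twisted Seiberg--Witten Floer cohomology, together with nonvanishing of the latter for $\Sigma \times S^1$ in the relevant non-torsion spin-c structure, to conclude $\TWPFH(Y_\phi,\Gamma) \neq 0$. Since $d$ is chosen a priori, its value serves as the period bound needed above.

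Nonvanishing of $\TWPFH(Y_\phi,\Gamma)$ forces the chain complex to contain a generator, i.e.\ an orbit set $\{(\gamma_i,m_i)\}$ with $\sum_i m_i[\gamma_i] = \Gamma = (A,d)$ in $H_1(Y_\phi;\mathbb{Z})$; projecting onto the $H_1(\Sigma;\mathbb{Z})$ summand yields $\sum_i m_i[\gamma_i]_\Sigma = A \neq 0$, so some simple orbit $\gamma_{i_0}$ has nonzero class there. Unwinding the definitions, $\gamma_{i_0}$ is the mapping-torus incarnation of a periodic point of $\phi$ whose orbit loop represents $[\gamma_{i_0}]_\Sigma \neq 0$ and whose period equals the degree of $\gamma_{i_0}$ over $S^1$, hence is at most $d$; this is the non-nullhomologous periodic point of period $\leq d$ that the first step requires. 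The main obstacle is the second paragraph: arranging the twisted coefficient system so that $\TWPFH(Y_\phi,\Gamma)$ is well-defined for a class with nonzero horizontal part under the weak hypothesis of rational rotation \emph{direction} (rather than a rational rotation \emph{vector}), and proving its nonvanishing via the Seiberg--Witten Floer homology of $\Sigma \times S^1$ in non-torsion spin-c structures. The approximation and orbit-extraction arguments are comparatively routine; the only delicate points are that the $H_1(\Sigma)$-part of an orbit's class in the mapping torus really is the homology class of its orbit loop, and that being non-nullhomologous is preserved under $C^0$-limits of periodic orbits of bounded period.
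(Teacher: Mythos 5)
Your approach diverges from the paper's at exactly the point you flag as the ``main obstacle,'' and that obstacle is genuine. You observe correctly that the untwisted non-vanishing theorem of \cite{closing} imposes a monotonicity constraint forcing $(d+1-g)\mathcal{F}(\phi)$ to lie in $H_1(\Sigma;\mathbb{Z})$, which fails when $\mathcal{F}(\phi)$ is an \emph{irrational} multiple of a rational class. You propose to route around this via twisted PFH over a Novikov ring and an appeal to non-vanishing of twisted Seiberg--Witten Floer homology of $\Sigma\times S^1$ in non-torsion spin-c structures. But this is precisely the ingredient that is not available: the Lee--Taubes isomorphism invoked in \cite{closing} is with $\overline{\mathrm{HM}}$ over $\mathbb{Z}/2$ (no local system), and the non-vanishing statement proved there is for untwisted PFH in classes $\Gamma$ satisfying the monotonicity identity~\eqref{eq:gammad}. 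A twisted analogue — both the isomorphism with a suitably twisted $\overline{\mathrm{HM}}$ and the non-vanishing of the latter — is not established, and proving it would be a substantial separate project. As written, your second paragraph does not close the argument; it restates the obstruction.

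The paper's resolution is a soft geometric trick that avoids the twisted theory entirely: a blow-up argument that \emph{rescales} the rotation vector. Since $\Sigma$ has genus $\geq 2$, the Lefschetz theorem gives $\phi$ a fixed point, which one may assume is contractible (else done); choosing an identity isotopy fixing it, one blows up that point and caps with a symplectic disk whose area is a free parameter $B-A$. The extended map $\hat\phi$ on the capped surface has $\pi_*\mathcal{F}(\hat\phi) = (A/B)\,\mathcal{F}(\phi)$, so by choosing $B$ one rescales an irrational multiple of $\rho$ down to a rational multiple $p/q\cdot\rho$. Now the untwisted Proposition~\ref{prop:pfh_nonvanishing} applies to $\hat\phi$, producing an orbit set in a class $\Gamma$ with nonzero $\Sigma$-component and total period $d \leq q+g-1$, hence a non-contractible periodic point of $\hat\phi$. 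Since the capping disk is $\hat\Phi$-invariant and every periodic point in it is contractible, the non-contractible point lives in the original surface and is a periodic point of $\phi$. This delivers exactly the period bound your first paragraph needs (with $d$ depending only on the rotation data), and your approximation argument — compose with Hamiltonians approximating $\psi^{-1}\circ\phi$ to keep $\mathcal{F}$ fixed, pass to a $C^0$-limit, observe that the homology class of the orbit loop is locally constant for bounded period — then finishes, and in fact is essentially the paper's own homeomorphism reduction. Your step extracting a non-nullhomologous orbit from $\sum_i m_i[\gamma_i]=\Gamma$ with nonzero $H_1(\Sigma)$-part is also correct and matches the paper. The one piece you would need to add even to the blow-up route is the fixed-point input and the observation that blow-up only introduces contractible periodic points; without it there is no fixed point to blow up, which is exactly why the torus version (Proposition~\ref{prop:non_contractible_torus_technical}) carries an explicit fixed-point hypothesis.
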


\begin{rem}
An interesting problem related to Theorem~\ref{thm:non_contractible}, posed by Ginzburg, is to determine whether a $C^\infty$-generic Hamiltonian diffeomorphism of a closed and symplectically aspherical symplectic manifold has a non-contractible periodic point. This was proved for the $2$-torus by Le Calvez--Tal \cite{lct18} and was recently extended to all closed surfaces of positive genus by Le Calvez--Sambarino \cite{lct23}. 
\end{rem}

A recent result from symplectic geometry is central to the proof of Theorem~\ref{thm:non_contractible}. In \cite{closing}, a precise non-vanishing theorem is proved for the periodic Floer homology (PFH) of area-preserving diffeomorphisms of closed surfaces with rational rotation vector $(\mathcal{F}(\phi) \in H_1(\Sigma; \mathbb{Q}))$. PFH is a homology theory for area-preserving surface maps built out of their periodic orbits. The non-vanishing theorem relies on a deep result of Lee--Taubes \cite{lee_taubes} showing PFH is isomorphic to monopole Floer homology. We explain more about PFH and state the non-vanishing theorem at the beginning of Section~\ref{sec:pfh}. 

There are two significant issues to overcome in the proof of Theorem~\ref{thm:non_contractible}. The first issue is that PFH is only well-defined for diffeomorphisms, not homeomorphisms. To get around this, we observe that a quantitative version (Proposition~\ref{prop:non_contractible_technical}) of Theorem~\ref{thm:non_contractible} holds; there exists a non-contractible periodic point with an upper bound on its minimal period depending only on the rotation vector. This allows us to extend Theorem~\ref{thm:non_contractible} to homeomorphisms by an approximation argument. 

The second issue is that the non-vanishing of PFH is only known for maps with rotation vector in $H_1(\Sigma; \mathbb{Q})$ and not all maps with rational rotation direction. We cannot hope for too much when $\mathcal{F}(\phi) \not\in H_1(\Sigma; \mathbb{Q})$, in this case there are many examples (irrational torus rotations, translation flows in minimal directions) where PFH essentially vanishes\footnote{More precisely, it vanishes in nontrivial homological gradings, so it only detects null-homologous periodic orbit sets, in which every periodic orbit could be contractible.}. We deal with this by introducing a blow-up argument to reduce from the case of rational rotation direction to the case where $\mathcal{F}(\phi) \in H_1(\Sigma; \mathbb{Q})$. When $\Sigma$ has genus $\geq 2$, $\phi$ has a fixed point, and we can assume without loss of generality that it is contractible since otherwise we would already be done. We blow up this fixed point and attach a disk of a specified area to the boundary circle, and then extend the isotopy over the new surface. The extended map has a rotation vector in the same direction, but scaled by a constant in $(0, 1)$, which depends on the area of the attached disk. We are free to attach a disk of any area, so we scale the rotation vector down to a rational vector, and note that the only new periodic points introduced by blowing up are contractible, so the original map must have a non-contractible periodic point. This argument reveals more generally that, in genus $\geq 2$, the dynamics are essentially identical for maps with $\mathcal{F}(\phi) \in H_1(\Sigma;\mathbb{Q})$ and those with rational rotation direction. 

\subsection{Maps not isotopic to the identity} \label{subsec:intro_non_identity} 

Assume that $\Sigma$ is closed and has genus $\geq 2$. It is known \cite{lecalvez} that either $\phi$ has periodic points of unbounded minimal period or it has periodic Nielsen--Thurston class. Therefore, if $\phi$ has periodic Nielsen--Thurston class and some iterate $\phi^q \in \Homeo_0(\Sigma, \omega)$ has rational rotation direction, Theorem~\ref{thm:main} implies $\phi$ is either periodic or has periodic points of unbounded minimal period. We will now show that maps with periodic Nielsen--Thurston class and rational asymptotic cycle have an iterate with rational rotation direction. 

Let $M_\phi$ denote the mapping torus of $\phi$. The \emph{asymptotic cycle} $\cC(\phi) \in H_1(M_\phi; \bR)$ is an analogue of the rotation vector for area-preserving maps not isotopic to the identity; by rational asymptotic cycle we mean $\cC(\phi) \in H_1(M_\phi; \mathbb{Q})$. It is defined by applying Schwartzman's construction \cite{schwartzman} to the suspension flow. When $\phi$ is isotopic to the identity, a choice of identity isotopy $\Phi$ defines a diffeomorphism $M_\phi \simeq \mathbb{T} \times \Sigma$ where $\mathbb{T} := \mathbb{R}/\mathbb{Z}$ denotes the circle. Lemma~\ref{lem:asymptotic_cycle_computation} proves that, under this identification, $$\cC(\phi) = [\mathbb{T}] + \mathcal{F}(\Phi).$$ 

If $\phi \in \Homeo_0(\Sigma, \omega)$ has rational asymptotic cycle, this computation shows that $\phi$ has rational rotation direction. The property $\cC(\phi) \in H_1(M_\phi; \mathbb{Q})$ also behaves well under iteration; Lemma~\ref{lem:asymptotic_cycle_iteration} below shows that if $\cC(\phi)$ is rational, then so is $\cC(\phi^k)$ for each $k > 1$. Putting these facts together implies our claim above, that if $\phi$ has periodic Nielsen--Thurston class and $\cC(\phi) \in H_1(M_\phi; \mathbb{Q})$, then it has an iterate $\phi^q \in \Homeo_0(\Sigma, \omega)$ with rational rotation direction. Combining this with Theorem~\ref{thm:main} proves the following theorem.

\begin{thm}\label{thm:non_identity}
Fix a closed surface $\Sigma$ of genus $\geq 2$ and a smooth area form $\omega$. Let $\phi \in \Homeo(\Sigma, \omega)$ be any area-preserving homeomorphism such that $\cC(\phi) \in H_1(M_\phi; \mathbb{Q})$. Then $\phi$ is either periodic or has periodic points of unbounded minimal period. 
\end{thm}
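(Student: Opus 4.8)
The plan is to combine Le Calvez's Nielsen--Thurston trichotomy with Theorem~\ref{thm:main}, passing to a suitable iterate of $\phi$ and then descending back.

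First I would invoke the result of \cite{lecalvez} quoted above: on a closed surface of genus $\geq 2$, either $\phi$ has periodic points of unbounded minimal period, in which case we are done, or $\phi$ has periodic Nielsen--Thurston class, i.e.\ the image of $\phi$ in the mapping class group has finite order. Assume the latter and fix $q \geq 1$ with $\phi^q$ isotopic to the identity; since $\phi^q$ is area-preserving this places $\phi^q \in \Homeo_0(\Sigma,\omega)$.

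Next I would propagate the rationality hypothesis to this iterate. By Lemma~\ref{lem:asymptotic_cycle_iteration}, the assumption $\cC(\phi) \in H_1(M_\phi;\mathbb{Q})$ forces $\cC(\phi^q) \in H_1(M_{\phi^q};\mathbb{Q})$. Choosing an identity isotopy $\Phi$ from the identity to $\phi^q$ and using the resulting identification $M_{\phi^q} \simeq \mathbb{T}\times\Sigma$, Lemma~\ref{lem:asymptotic_cycle_computation} gives $\cC(\phi^q) = [\mathbb{T}] + \mathcal{F}(\Phi)$. Since $[\mathbb{T}]$ is an integral class and the left-hand side is rational, we conclude $\mathcal{F}(\Phi) \in H_1(\Sigma;\mathbb{Q})$, so $\phi^q$ has rational rotation direction. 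Theorem~\ref{thm:main} now applies to $\phi^q$: either $\phi^q$ is the identity, or $\phi^q$ has periodic points of unbounded minimal period.

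Finally I would descend back to $\phi$. If $\phi^q = \mathrm{id}$ then $\phi$ is periodic. Otherwise, if $x$ is a periodic point of $\phi^q$ of minimal period $n$, and $m$ denotes its minimal period under $\phi$, then $(\phi^q)^k(x)=x$ iff $m \mid qk$, so $n = m/\gcd(m,q) \leq m$; hence unbounded minimal periods for $\phi^q$ yield unbounded minimal periods for $\phi$. This completes the argument. I do not expect a serious obstacle in this theorem per se: the substantive inputs (the two homological lemmas on the asymptotic cycle, the Nielsen--Thurston dichotomy, and above all Theorem~\ref{thm:main}) are all in hand, and what remains is elementary bookkeeping with iterates; the only point requiring a little care is the verification that a genuine area-preserving iterate of $\phi$ lands in $\Homeo_0(\Sigma,\omega)$.
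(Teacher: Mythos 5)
Your proof is correct and follows essentially the same approach as the paper: invoke Le Calvez's dichotomy to reduce to the periodic Nielsen--Thurston case, pass to an iterate $\phi^q\in\Homeo_0(\Sigma,\omega)$, use Lemmas~\ref{lem:asymptotic_cycle_iteration} and~\ref{lem:asymptotic_cycle_computation} to conclude $\phi^q$ has rational rotation direction, and apply Theorem~\ref{thm:main}. The only difference is that you spell out the (easy) descent of unbounded minimal periods from $\phi^q$ back to $\phi$, which the paper leaves implicit.
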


\begin{rem}
Assuming $\cC(\phi) \in H_1(M_\phi; \mathbb{Q})$, for $\phi$ with periodic Nielsen--Thurston class, is sufficient but not necessary for $\phi$ to have an iterate with rational rotation direction. It can be weakened, but we were unable to find a sufficiently elegant condition to write down. It is still true that the set of maps with $\cC(\phi) \in H_1(M_\phi; \mathbb{Q})$ is dense. 
\end{rem} 

\begin{rem}
Theorem~\ref{thm:non_identity} and a similar Baire category argument extend Remark~\ref{rem:infinitely_many} to all area-preserving diffeomorphisms. Fix a closed surface of genus $\geq 2$ and a smooth area form $\omega$. Then a $C^\infty$-generic $\phi \in \Diff(\Sigma, \omega)$ is either periodic or has periodic points of unbounded minimal period. We stress that this statement is not new. It follows from prior work \cite{closing, closing_eh}, which establishes the much stronger statement that a $C^\infty$-generic $\phi \in \Diff(\Sigma, \omega)$ has a dense set of periodic points. 
\end{rem}

There are also asymptotic cycles $\cC(\phi, \mu) \in H_1(M_\phi; \bR)$ for each $\phi$-invariant Borel probability measure $\mu$. The same proof as Theorem~\ref{thm:non_identity}, replacing Theorem~\ref{thm:main} with Theorem~\ref{thm:extension}, implies the following result. 

\begin{thm}\label{thm:non_identity_extension}
Fix a closed surface $\Sigma$ of genus $\geq 2$ and a smooth area form $\omega$. Let $\phi \in \Homeo(\Sigma, \mu)$ be an area-preserving homeomorphism preserving a Borel probability measure $\mu$ of full support. Assume that $\cC(\phi, \mu) \in H_1(M_\phi; \mathbb{Q})$. Then $\phi$ has infinitely many periodic points. Moreover, if $\mu$ has no atoms, then $\phi$ is either periodic or has periodic points of unbounded minimal period. 
\end{thm}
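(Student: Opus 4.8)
The plan is to rerun the argument behind Theorem~\ref{thm:non_identity} verbatim, replacing Theorem~\ref{thm:main} by Theorem~\ref{thm:extension} and using the evident invariant-measure analogues of Lemma~\ref{lem:asymptotic_cycle_iteration} and Lemma~\ref{lem:asymptotic_cycle_computation}. First I would invoke the dichotomy of Le Calvez \cite{lecalvez} (whose proof applies whenever the non-wandering set is the whole surface, which holds here since Poincar\'e recurrence and the full support of $\mu$ force every point of $\Sigma$ to be non-wandering): either $\phi$ has periodic points of unbounded minimal period, in which case we are already done and the hypothesis on atoms is not needed, or $\phi$ has periodic Nielsen--Thurston class, so that $\phi^q \in \Homeo_0(\Sigma, \mu)$ for some integer $q \geq 1$.

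Second, in this latter case I would check that $\phi^q$ has rotation vector equal to a real multiple of a rational class. The invariant-measure version of Lemma~\ref{lem:asymptotic_cycle_iteration} gives $\cC(\phi^q, \mu) \in H_1(M_{\phi^q}; \mathbb{Q})$ from $\cC(\phi, \mu) \in H_1(M_\phi; \mathbb{Q})$, and the invariant-measure version of Lemma~\ref{lem:asymptotic_cycle_computation} identifies $\cC(\phi^q, \mu) = [\mathbb{T}] + \mathcal{F}(\phi^q, \mu)$ under the diffeomorphism $M_{\phi^q} \simeq \mathbb{T} \times \Sigma$ determined by a choice of identity isotopy of $\phi^q$ (recall that in genus $\geq 2$ the class $\mathcal{F}(\phi^q, \mu) \in H_1(\Sigma; \mathbb{R})$ does not depend on this choice). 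Since $[\mathbb{T}]$ is an integral class, subtracting shows $\mathcal{F}(\phi^q, \mu)$ in fact lies in $H_1(\Sigma; \mathbb{Q})$, hence is a real multiple of a rational class. Theorem~\ref{thm:extension} then applies to $\phi^q$: it has infinitely many periodic points, and if $\mu$ has no atoms then $\phi^q$ is either the identity or has periodic points of unbounded minimal period.

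Third, I would transfer these conclusions to $\phi$. Every periodic point of $\phi^q$ is a periodic point of $\phi$, so $\phi$ has infinitely many periodic points. Assume now that $\mu$ has no atoms. If $\phi^q$ is the identity then $\phi$ is periodic; otherwise, were the minimal periods of $\phi$ bounded by some $N$, then, since a point of $\phi$-minimal period $n$ has $\phi^q$-minimal period $n/\gcd(n,q) \leq n$, the minimal periods of $\phi^q$ would also be bounded by $N$, contradicting the previous step, so $\phi$ has periodic points of unbounded minimal period. The atom hypothesis is used only here.

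The only place where this is not a purely formal rerun of the proof of Theorem~\ref{thm:non_identity} is the behavior of the $\mu$-asymptotic cycle under iteration and under passage to an identity-isotopic iterate. Both the iteration statement and the identity $\cC(\phi^q, \mu) = [\mathbb{T}] + \mathcal{F}(\phi^q, \mu)$ are assertions about the mapping torus $M_{\phi^q}$ and Schwartzman's construction applied to $\mu$, and their proofs never use that $\mu$ is a smooth area form, so these extensions should be immediate once the $\omega$-versions are established. The remaining point to confirm is that the dichotomy of \cite{lecalvez} is indeed available for homeomorphisms preserving an arbitrary full-support Borel probability measure, which, as noted above, reduces to the observation that such a map has full non-wandering set.
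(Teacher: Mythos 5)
Your proposal is correct and follows essentially the same route the paper takes: apply the Le Calvez dichotomy, pass to an iterate isotopic to the identity, verify that iterate has rational rotation direction via Lemmas~\ref{lem:asymptotic_cycle_iteration} and \ref{lem:asymptotic_cycle_computation}, apply Theorem~\ref{thm:extension}, and transfer back to $\phi$. Two small observations. First, Lemmas~\ref{lem:asymptotic_cycle_computation} and \ref{lem:asymptotic_cycle_iteration} are already stated in the paper for an arbitrary $\phi$-invariant Borel probability measure $\mu$, so no ``invariant-measure analogue'' needs to be formulated; you can invoke them directly. Second, and more substantively, you explicitly justify that the dichotomy of \cite{lecalvez} applies in this generality by noting that a $\mu$-preserving homeomorphism with $\mu$ of full support is non-wandering (via Poincar\'e recurrence). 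This is a point the paper leaves implicit in the one-sentence assertion that ``the same proof'' works, and your version is more careful; making it explicit is a genuine improvement in rigor, since the hypothesis of \cite{lecalvez} is not literally ``area-preserving'' and the reduction matters once $\mu$ is no longer a smooth area measure. The remaining steps (subtracting $[\mathbb{T}]$ to see $\mathcal{F}(\phi^q,\mu) \in H_1(\Sigma;\mathbb{Q})$, the divisibility argument showing $\phi^q$-minimal periods bound $\phi$-minimal periods from below) are correct.
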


\subsection{Lower genus surfaces}\label{subsec:lower_genus}

The analogues of the above theorems when $\Sigma$ has genus $0$ are already known. Collapsing the boundary components and appealing to \cite{franks_handel, lecalvez_ham} gives a sharp characterization of existence and multiplicity of periodic points. A genus $1$ version of Theorem~\ref{thm:main} follows from work of Le Calvez.

\begin{prop}[\cite{lecalvez_ham}] \label{prop:torus}
Fix a compact surface $\Sigma$ of genus $1$ and a smooth area form $\omega$. Let $\phi \in \Homeo_0(\Sigma, \omega)$ be an area-preserving homeomorphism, and assume that $\mathcal{F}(\phi) \in H_1(\Sigma; \mathbb{Q})\,/\,\Gamma_\omega$. Then $\phi$ is either periodic or has periodic points of unbounded minimal period. 
\end{prop}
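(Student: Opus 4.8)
The proof will package the Hamiltonian case of the Conley conjecture on genus-$1$ surfaces, due to Franks--Handel \cite{franks_handel} and Le Calvez \cite{lecalvez_ham}, together with the iteration behaviour of the rotation vector. The strategy is to reduce to the closed torus, pass to an iterate that kills the rotation vector (so that the iterate is Hamiltonian), apply \cite{franks_handel, lecalvez_ham}, and then descend the conclusion back to $\phi$.

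First I would reduce to the case $\Sigma = \mathbb{T}^2$. If $\partial\Sigma \neq \emptyset$, replace $\phi$ by an iterate so that it fixes each boundary component setwise, and collapse each boundary circle to a point, as in the genus-$0$ case discussed above. This produces an area-preserving homeomorphism $\bar\phi$ of $\mathbb{T}^2$, isotopic to the identity, whose periodic points are those of $\phi$ together with the finitely many collapsed points; hence $\bar\phi$ is periodic, resp.\ has periodic points of unbounded minimal period, if and only if $\phi$ is, resp.\ does. Moreover the surjection $H_1(\Sigma;\mathbb{R}) \to H_1(\mathbb{T}^2;\mathbb{R})$ carries $\mathcal{F}(\phi)$ to $\mathcal{F}(\bar\phi)$, so $\bar\phi$ again has rational rotation vector. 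Replacing $\omega$ by a constant multiple — which changes neither the map nor the rationality hypothesis — we may also assume $\int_{\mathbb{T}^2}\omega = 1$, so that $\Gamma_\omega$ is the standard lattice $H_1(\mathbb{T}^2;\mathbb{Z}) \subset H_1(\mathbb{T}^2;\mathbb{R})$.

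Now, since $\mathcal{F}(\phi)$ is represented by a class in $H_1(\mathbb{T}^2;\mathbb{Q})$ and $\Gamma_\omega = H_1(\mathbb{T}^2;\mathbb{Z})$, there is an integer $q \geq 1$ with $q\,\mathcal{F}(\phi) \equiv 0$ in $H_1(\mathbb{T}^2;\mathbb{R})/\Gamma_\omega$. Since the rotation vector is additive under iteration, $\mathcal{F}(\phi^q) = q\,\mathcal{F}(\phi) = 0$, i.e.\ $\phi^q$ is Hamiltonian. By the Hamiltonian case of the Conley conjecture for area-preserving homeomorphisms of closed surfaces of genus $\geq 1$ \cite{franks_handel, lecalvez_ham}, $\phi^q$ is either the identity or has periodic points of unbounded minimal period.

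Finally I would transfer this back to $\phi$. If $\phi^q = \mathrm{id}$ then $\phi$ is periodic and we are done. Otherwise choose points $x_n$ with minimal $\phi^q$-period $m_n \to \infty$, and let $k_n$ be the minimal $\phi$-period of $x_n$; from $\phi^{qm}(x_n) = x_n \iff k_n \mid qm$ one gets $m_n = k_n/\gcd(k_n, q) \leq k_n$, so $k_n \to \infty$ and $\phi$ has periodic points of unbounded minimal period. Since the two cases are exhaustive, this proves the proposition. The only step that requires genuine care is the boundary reduction — one must check that collapsing creates no new periodic points of large period and is compatible with both the rotation vector and the invariant measure — but this is routine and entirely parallel to the already-known genus-$0$ case; all of the real content lies in the cited theorem of \cite{franks_handel, lecalvez_ham}.
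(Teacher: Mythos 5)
Your proof is correct and follows exactly the route the paper takes: reduce to the closed torus, observe that rationality of $\mathcal{F}(\phi)$ modulo $\Gamma_\omega = H_1(\mathbb{T}^2;\mathbb{Z})$ together with additivity of $\mathcal{F}$ yields a Hamiltonian iterate $\phi^q$, invoke Le Calvez's theorem on Hamiltonian torus homeomorphisms, and transfer the dichotomy back to $\phi$. The paper presents this in a single sentence (``Any map satisfying the conditions of Proposition~\ref{prop:torus} has a Hamiltonian iterate, and the main result of \cite{lecalvez_ham} shows Hamiltonian torus homeomorphisms are either the identity or have periodic points of unbounded minimal period''); you have merely filled in the boundary-collapse bookkeeping and the elementary period arithmetic $m_n = k_n/\gcd(k_n,q) \leq k_n$, both of which are as routine as you say.
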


The assumptions are stronger than in Theorem~\ref{thm:main}, but the result is sharp. Any translation $(x, y) \mapsto (x + a, y + b)$ on $\mathbb{T}^2 := \bR^2/\mathbb{Z}^2$, where at least one of $a$ and $b$ is not rational, has irrational rotation vector and also has no periodic points. The proof of Proposition~\ref{prop:torus} is straightforward. Any map satisfying the conditions of Proposition~\ref{prop:torus} has a Hamiltonian iterate, and the main result of \cite{lecalvez_ham} shows Hamiltonian torus homeomorphisms are either the identity or have periodic points of unbounded minimal period. A sharp version of Theorem~\ref{thm:non_contractible} for smooth torus maps\footnote{An explanation for why we need smoothness is provided below the statement of Proposition~\ref{prop:non_contractible_torus_technical}.} is also true. 

\begin{thm} \label{thm:non_contractible_torus}
Fix $\phi \in \Diff_0(\mathbb{T}^2, dx \wedge dy)$ and an identity isotopy $\Phi$ such that $\mathcal{F}(\Phi)$ is a real multiple of a rational class. Then $\phi$ is either Hamiltonian, has no periodic points, or has a periodic point which is not $\Phi$-contractible.
\end{thm}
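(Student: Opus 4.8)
The plan is to follow the same blueprint as the proof of Theorem~\ref{thm:non_contractible}, adapted to the torus, but using the PFH non-vanishing input for $\mathbb{T}^2$ rather than the genus $\geq 2$ input, and keeping the hypotheses in the smooth category (this is why $\phi \in \Diff_0$ rather than $\Homeo_0$ — on $\mathbb{T}^2$ there is no fixed point to blow up for free, so the approximation trick that handles homeomorphisms in genus $\geq 2$ is unavailable, or at least requires the map to already have a periodic point). First I would dispose of the trivial cases: if $\phi$ has no periodic points there is nothing to prove, so assume $\phi$ has a periodic point, say of period $q$. After replacing $\phi$ by $\phi^q$ and $\Phi$ by the iterated isotopy (which multiplies $\mathcal{F}$ by $q$ and so preserves rationality of the direction), we may assume $\phi$ has a fixed point $p$. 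I would also reduce, exactly as in the genus $\geq 2$ argument sketched in the introduction, to the case where this fixed point $p$ is $\Phi$-contractible: if it is not, we are already done.

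Next comes the blow-up reduction. Blow up the fixed point $p$ and glue in a disk of area $a$ to the resulting boundary circle, extending $\phi$ and the isotopy $\Phi$ over the new surface $\Sigma'$ — but $\Sigma' = \mathbb{T}^2 \# (\text{disk glued to a blown-up point})$ is again a torus only if we are careful; in fact blowing up a point of $\mathbb{T}^2$ and capping yields a torus with a slightly larger total area, so $\Sigma' \cong \mathbb{T}^2$ with area form $\omega + (\text{area } a)$. The extended diffeomorphism $\phi'$ has rotation vector equal to a positive scalar multiple of $\mathcal{F}(\Phi)$, with the scaling constant in $(0,1)$ tunable by the choice of $a$; choosing $a$ appropriately makes $\mathcal{F}(\phi') \in H_1(\mathbb{T}^2; \mathbb{Q})$. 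Then I would invoke the PFH non-vanishing theorem for area-preserving diffeomorphisms of $\mathbb{T}^2$ with rational rotation vector — this is the analogue for the torus of the result from \cite{closing} cited in the excerpt — to produce a periodic orbit of $\phi'$ representing a prescribed non-trivial class in $H_1(M_{\phi'}; \mathbb{Q})$, hence in particular a periodic point of $\phi'$ that is not $\Phi'$-contractible. The only periodic points created by the blow-up-and-cap operation lie in the glued disk and are $\Phi'$-contractible (they come from the dynamics on the capping disk, which is a small perturbation of a rotation and contributes only contractible orbits), so the non-contractible periodic orbit must already live on the original torus; it gives a non-$\Phi$-contractible periodic point of $\phi$.

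The main obstacle, and the point requiring the most care, is the passage from ``PFH does not vanish in a nontrivial grading'' to ``there is a non-$\Phi$-contractible periodic point.'' Over the torus one must be careful that the homological class detected by the non-vanishing result is genuinely non-trivial in $H_1(\mathbb{T}^2;\mathbb{R})$ and not merely non-trivial as a degree count while consisting of contractible orbits — recall the footnote in the excerpt warning that for irrational rotations PFH only detects null-homologous orbit sets. This is precisely where the hypothesis that $\mathcal{F}(\Phi)$ is a nonzero real multiple of a \emph{rational} class, together with the blow-up normalization making the scaled vector land in $H_1(\mathbb{T}^2;\mathbb{Q}) \setminus \{0\}$, is used: it guarantees the relevant PFH grading corresponds to a class projecting nontrivially to $H_1(\mathbb{T}^2;\mathbb{R})$, so the detected orbit set is not null-homologous and therefore contains an orbit whose projection to $\Sigma$ is non-contractible. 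I would also need the quantitative refinement — an a priori bound on the minimal period of the non-contractible orbit in terms of the rotation vector — but on $\mathbb{T}^2$ in the smooth category this bound is not actually needed for the statement as phrased (no approximation step), so I expect it can be omitted here, and indeed its absence is what confines Theorem~\ref{thm:non_contractible_torus} to the smooth setting.
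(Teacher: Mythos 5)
Your proposal is correct and follows essentially the same route as the paper: reduce to a map with a $\Phi$-contractible fixed point by passing to an iterate (the paper packages the fixed-point case as Proposition~\ref{prop:non_contractible_torus_technical}), blow up and cap to scale the isotopy rotation vector into $H_1(\mathbb{T}^2;\mathbb{Q})$, apply PFH non-vanishing to obtain an orbit set with nonzero rotation vector, and observe that this orbit set must lie off the capping disk and therefore gives a $\Phi$-non-contractible periodic point of the original map. One small clarification: Proposition~\ref{prop:pfh_nonvanishing} is already stated for all genera including $g=1$ (and Lemma~\ref{lem:pfh_nonvanishing} covers the torus with $d \geq 1$), so no separate ``torus analogue'' is needed.
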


The proof is given in Section~\ref{sec:pfh}. We also remark that if $\mathcal{F}(\phi) \in H_1(\mathbb{T}^2; \mathbb{Q})$, then $\phi$ has a Hamiltonian iterate, so it has a periodic point by Conley--Zehnder's fixed point theorem \cite{conley_zehnder}. There is also a version of Theorem~\ref{thm:non_identity} for the torus.

\begin{prop} \label{prop:torus_non_identity}
Fix a compact surface $\Sigma$ of genus $1$ and a smooth area form $\omega$. Let $\phi \in \Homeo(\Sigma, \omega)$ be an area-preserving homeomorphism, and assume that $\cC(\phi) \in H_1(M_\phi; \mathbb{Q})$. Then $\phi$ is either periodic or has periodic points of unbounded minimal period. 
\end{prop}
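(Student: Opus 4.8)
The plan is to run the proof of Theorem~\ref{thm:non_identity} with Proposition~\ref{prop:torus} in place of Theorem~\ref{thm:main}. First I would reduce to the case that $\Sigma$ is the closed torus. If $\partial\Sigma\neq\emptyset$, collapse each boundary component of $\Sigma$ to a point; since $\partial\Sigma$ has zero $\omega$-area this yields a homeomorphism $\hat\phi$ of $\mathbb{T}^2$ preserving a full-support, atomless Borel measure, and by Oxtoby--Ulam \cite{oxtoby_ulam} that measure is the pullback of $dx\wedge dy$ under a homeomorphism, so after conjugating we obtain $\psi\in\Homeo(\mathbb{T}^2,dx\wedge dy)$ with $\cC(\psi)\in H_1(M_\psi;\mathbb{Q})$. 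The collapse creates only finitely many new periodic points, each of bounded minimal period, so $\psi$ is periodic (resp.\ has periodic points of unbounded minimal period) exactly when $\phi$ is; hence it suffices to prove the statement for $\Sigma=\mathbb{T}^2$.

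For the closed torus I would invoke Le Calvez's structure theorem \cite{lecalvez}: either $\phi$ has periodic points of unbounded minimal period --- and then we are done --- or $\phi$ has periodic Nielsen--Thurston class, or $\phi$ is one of the exceptional cases peculiar to genus $1$. In the last case --- which for $g=1$ consists of maps with no periodic points --- I would argue that $\phi$ has irrational asymptotic cycle, so $\cC(\phi)\notin H_1(M_\phi;\mathbb{Q})$, contradicting the hypothesis; this rests on the classification of periodic-point-free torus homeomorphisms (rotation sets in the identity class, following Franks, and Nielsen--Thurston theory for reducible classes) together with the computation of $H_1(M_\phi)$. So this case does not occur.

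It remains to treat $\phi$ with periodic Nielsen--Thurston class, i.e.\ with $[\phi]$ of finite order in $\mathrm{MCG}(\mathbb{T}^2)\cong SL(2,\bZ)$, so that $\phi^q\in\Homeo_0(\mathbb{T}^2,dx\wedge dy)$ for some $q\geq 1$. By Lemma~\ref{lem:asymptotic_cycle_iteration}, $\cC(\phi^q)\in H_1(M_{\phi^q};\mathbb{Q})$. Fixing an identity isotopy $\Phi$ from the identity to $\phi^q$ and applying Lemma~\ref{lem:asymptotic_cycle_computation}, under the identification $H_1(M_{\phi^q};\bR)\cong\bR[\mathbb{T}]\oplus H_1(\mathbb{T}^2;\bR)$ we have $\cC(\phi^q)=[\mathbb{T}]+\mathcal{F}(\Phi)$, whence $\mathcal{F}(\Phi)\in H_1(\mathbb{T}^2;\mathbb{Q})$ and $\mathcal{F}(\phi^q)\in H_1(\mathbb{T}^2;\mathbb{Q})/\Gamma_\omega$. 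Proposition~\ref{prop:torus} now applies to $\phi^q$: it is either periodic, in which case $\phi$ is periodic, or it has periodic points of unbounded minimal period, in which case so does $\phi$, since a point of minimal $\phi^q$-period $n$ has minimal $\phi$-period at least $n$.

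\textbf{The main obstacle.} I expect the hard part to be the analysis of the genus-$1$ exceptional cases of \cite{lecalvez}: identifying them precisely and checking that each one has irrational asymptotic cycle. This is the only place the argument departs from the genus $\geq 2$ proof of Theorem~\ref{thm:non_identity}, and it requires genuinely two-dimensional, torus-specific input --- the classification of periodic-point-free surface homeomorphisms via rotation sets and Nielsen--Thurston theory, and in particular the fact that a reducible area-preserving torus map with rational transverse rotation number must have a periodic point. Everything else is bookkeeping transported from the higher-genus case.
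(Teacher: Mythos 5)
Your overall structure is the same as the paper's: reduce to the closed torus, show $\phi$ has an iterate $\phi^q$ isotopic to the identity, use Lemma~\ref{lem:asymptotic_cycle_iteration} and Lemma~\ref{lem:asymptotic_cycle_computation} to deduce that $\phi^q$ has rational rotation direction, and then apply Proposition~\ref{prop:torus}. The boundary-collapsing step via Oxtoby--Ulam is correct (the paper suppresses it, just as it does in the proof of Theorem~\ref{thm:main}).

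The gap you flag at the end is, however, a genuine one, and the paper closes it by invoking a different structure theorem. You appeal to Le Calvez \cite{lecalvez}, whose statement on the torus leaves an undetermined collection of ``edge cases,'' so you would still have to classify periodic-point-free torus homeomorphisms and compute their asymptotic cycles. The paper instead cites Addas Zanata--Tal \cite{azt}, who prove precisely the trichotomy needed for area-preserving torus homeomorphisms: either $\phi$ has periodic points of unbounded minimal period, or $\phi$ is isotopic to a Dehn twist with no periodic points and vertical rotation set equal to a single irrational number, or some iterate of $\phi$ is isotopic to the identity. The middle case has irrational asymptotic cycle (the vertical rotation number is a coordinate of $\cC(\phi)$ in a natural basis of $H_1(M_\phi;\mathbb{R})\cong\mathbb{R}^2$ for Dehn twist classes), so it is excluded by the hypothesis $\cC(\phi)\in H_1(M_\phi;\mathbb{Q})$. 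Your intuition about what the exceptional case should be and why rationality rules it out is exactly right; you just lack the reference. With \cite{azt} in hand, your argument becomes the paper's.
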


Addas Zanata--Tal \cite{azt} proved that an area-preserving torus homeomorphism $\phi$ either has periodic points of unbounded minimal period, is isotopic to a Dehn twist with no periodic points and vertical rotation set reduced to an irrational number, or has an iterate isotopic to the identity. The assumption $\cC(\phi) \in H_1(M_\phi; \mathbb{Q})$ rules out the second case, so we can assume $\phi$ has an iterate $\phi^q$ isotopic to the identity. The assumption $\cC(\phi) \in H_1(M_\phi; \mathbb{Q})$ implies $\phi^q$ has rational rotation direction, and then by Proposition~\ref{prop:torus} $\phi^q$ is either the identity or has periodic points of unbounded minimal period.  

\subsection{Outline} Section \ref{sec:prelim} reviews some important preliminaries. Section \ref{sec:asymptotic_cycles} does some computations of asymptotic cycles which seem standard but which we could not find elsewhere. Section \ref{sec:pfh} presents a brief overview of PFH and the non-vanishing theorem from \cite{closing}, and then proves Theorems~\ref{thm:main}--\ref{thm:non_contractible} and \ref{thm:non_contractible_torus}. Theorems~\ref{thm:non_identity} and \ref{thm:non_identity_extension} were proved above using computations from Section~\ref{sec:asymptotic_cycles} and Theorems~\ref{thm:main} and \ref{thm:extension}. 

\subsection{Acknowledgements} Thanks to Dan Cristofaro-Gardiner for discussions regarding this project. Similar results were proved independently and simultaneously by Guih\'eneuf--Le Calvez--Passeggi \cite{glcp23}, using topological methods (homotopically transverse foliations, forcing theory). I would like to thank them for discussions regarding their work. This work was supported by the National Science Foundation under Award No. DGE-1656466 and the Miller Institute at the University of California Berkeley.

\section{Preliminaries} \label{sec:prelim}

\subsection{Area-preserving maps} 

\subsubsection{Diffeomorphisms}
Write $\Diff(\Sigma)$ for the space of diffeomorphisms $\phi: \Sigma \to \Sigma$, equipped with the topology of $C^\infty$-convergence of maps and their inverses, and let $\Diff(\Sigma, \omega)$ denote the space of diffeomorphisms such that $\phi^*\omega = \omega$. Let $\Diff_0(\Sigma)$ and $\Diff_0(\Sigma, \omega)$ denote the respective connected components of the identity. The group $\Diff_0(\Sigma, \omega)$ contains a large subgroup $\Ham(\Sigma, \omega)$ of \emph{Hamiltonian diffeomorphisms}, the maps with rotation vector $0$. An \emph{isotopy} is a continuous path $\Phi: [0,1] \to \Diff(\Sigma)$. Sometimes we will write $\Phi = \{\phi_t\}_{t \in [0,1]}$ to emphasize our interpretation of $\Phi$ as a one-parameter family of diffeomorphisms. An \emph{identity isotopy} of $\phi \in \Diff(\Sigma)$ is an isotopy $\Phi$ with $\Phi(0) = \text{Id}$ and $\Phi(1) = \phi$. 

\subsubsection{Homeomorphisms}

Write $\Homeo(\Sigma)$ for the space of homeomorphisms $\phi: \Sigma \to \Sigma$, equipped with the topology of $C^0$-convergence of maps and their inverses, and let $\Homeo(\Sigma, \mu)$ denote the space of homeomorphisms preserving a Borel measure $\mu$.  Let $\Homeo_0(\Sigma)$ and $\Homeo_0(\Sigma, \mu)$ denote the respective connected components of the identity. Isotopies of homeomorphisms are defined as above. It is well-known that $\Diff(\Sigma, \omega)$ is $C^0$-dense in $\Homeo(\Sigma, \omega)$, the space of area-preserving homeomorphisms. Write $\overline{\Ham}(\Sigma, \omega) \subset \Homeo_0(\Sigma, \omega)$ for the $C^0$-closure of $\Ham(\Sigma, \omega)$, the group of \emph{Hamiltonian homeomorphisms}. Fathi \cite[Section $6$]{fathi} showed that these are exactly the area-preserving homeomorphisms with rotation vector $0$. 

\subsubsection{Periodic points and orbits} Fix any $\phi \in \Homeo(\Sigma, \omega)$. A \emph{periodic point} of $\phi \in \Homeo(\Sigma, \omega)$ is a point $p \in \Sigma$ such that $\phi^k(p) = p$ for some finite $k \geq 1$, and the \emph{period} of $p$ is the minimal $k$ such that this holds. A \emph{periodic orbit} is a finite set $S = \{x_1, \ldots, x_k\}$ of not necessarily distinct points in $\Sigma$ which are cyclically permuted by $\phi$. A periodic orbit is \emph{simple} if all of the points are distinct. 

Fix $\phi \in \Homeo_0(\Sigma, \omega)$ and an identity isotopy $\Phi$. Fix any periodic point $p$ of period $k \geq 1$. The union of arcs 
$$\gamma_p := \bigcup_{j=0}^{k-1} \{\phi_t(\phi^j(p))\}_{t \in [0,1]}$$ 
is a closed loop in $\Sigma$. The point $p$ is \emph{$\Phi$-contractible} if $\gamma_p$ is contractible. Note that if $\Sigma$ has genus $\geq 2$, $\Homeo_0(\Sigma, \omega)$ is simply connected, so $\Phi$-contractibility is independent of the choice of $\Phi$. We will not specify $\Phi$ in this case. 

\subsection{Rotation vectors}

\subsubsection{Definition}

Fix $\phi \in \Homeo_0(\Sigma)$ and an identity isotopy $\Phi$. To any $\phi$-invariant Borel probability measure $\mu$ we associate a class $\mathcal{F}(\Phi, \mu) \in H_1(\Sigma; \bR)$ called its \emph{rotation vector}. The rotation vector depends only on the homotopy class of $\Phi$ relative to its endpoints. When $\Sigma$ has genus $\geq 2$, the space $\Homeo_0(\Sigma)$ is simply connected, so $\mathcal{F}(\Phi, \mu)$ is independent of the choice of $\Phi$, and we sometimes write it as $\mathcal{F}(\phi, \mu)$ instead. 

Let $[\Sigma, \mathbb{T}]$ denote the group of continuous maps from $\Sigma$ to the circle $\mathbb{T} := \mathbb{R}/\mathbb{Z}$. This is isomorphic to $H^1(\Sigma; \mathbb{Z})$. The isomorphism sends a class $[f] \in [\Sigma, \mathbb{T}]$ to the pullback $f^*d\theta$ of the oriented generator $d\theta \in H^1(\mathbb{T}; \mathbb{Z})$. The universal coefficient theorem implies $H_1(\Sigma; \mathbb{R})$ is isomorphic to $\Hom([\Sigma, \mathbb{T}], \bR)$. 

For any continuous circle-valued function $f: \Sigma \to \mathbb{T}$, the isotopy $\Phi$ defines a real-valued lift $g: \Sigma \to \mathbb{R}$ of the null-homotopic $\mathbb{T}$-valued function $f \circ \phi - f$. The map 
$$f \mapsto \int_\Sigma g\,d\mu$$
defines a real-valued linear functional on $[\Sigma, \mathbb{T}]$, and the rotation vector $\mathcal{F}(\Phi, \mu) \in H_1(\Sigma; \bR)$ is the associated homology class. 

\subsubsection{Rotation vectors of periodic orbits} 

Any periodic orbit $S = \{x_1, \ldots, x_k\}$ determines an invariant Borel probability measure; the average of the $\delta$-measures at its points. The rotation vector $\mathcal{F}(\Phi, S)$ is the rotation vector of this measure. This has a nice geometric interpretation. The union of arcs 
$$\gamma_S := \bigcup_{j=1}^{k} \{\phi_t(x_j)\}_{t \in [0,1]}$$ 
is a closed, oriented loop in $\Sigma$. It is easy to show
\begin{equation} \label{eq:rotation_vector_orbit} k^{-1} \cdot \mathcal{F}(\Phi, S) = [\gamma_S]. \end{equation}

If $p \in \Sigma$ is a periodic point, its rotation vector $\mathcal{F}(\Phi, p)$ is defined to be the rotation vector of any simple periodic orbit containing $p$. The identity \eqref{eq:rotation_vector_orbit} shows that if $\mathcal{F}(\Phi, p) \neq 0$ then $p$ is not $\Phi$-contractible. 

\subsubsection{Rotation vectors of area-preserving homeomorphisms} 

Fix $\phi \in \Homeo_0(\Sigma, \omega)$ and any identity isotopy $\Phi$. We use the following notation for the rotation vector of the normalized area measure:
$$\mathcal{F}(\Phi) := \mathcal{F}(\Phi, \Big(\int_\Sigma \omega\Big)^{-1}\cdot \omega) \in H_1(\Sigma; \bZ).$$

This invariant was introduced by Fathi \cite{fathi} as the \emph{mass flow}. The function $\mathcal{F}$ is $C^0$-continuous in $\Phi$ and additive with respect to pointwise composition of isotopies. Moreover, the image $\Gamma_\omega := \mathcal{F}(\pi_1(\Homeo(\Sigma, \omega))$ of the subgroup of loops based at the identity is a lattice in $H_1(\Sigma; \mathbb{Z})$. If $\Sigma = \mathbb{T}^2$ then $\Gamma_\omega = H_1(\Sigma; \mathbb{Z})$. If $\Sigma$ is closed and has genus $\geq 2$ then $\Gamma_\omega = \{0\}$. We end up with a homomorphism
 $$\mathcal{F}: \Homeo_0(\Sigma, \omega) \to H_1(\Sigma; \mathbb{R})\,/\,\Gamma_\omega.$$

\begin{rem}
For smooth area-preserving maps, the rotation vector is Poincar\'e dual to the \emph{flux homomorphism}, an invariant which might be more familiar to symplectic geometers. 
\end{rem}

\subsection{Mapping torii} Fix any $\phi \in \Homeo(\Sigma)$. The \emph{mapping torus} of $\phi$ is the compact $3$-manifold $M_\phi$ defined by quotienting $\mathbb{R}_t \times \Sigma$ by the relation $(1, p) \sim (0, \phi(p))$. Translation in the $t$-direction with speed $1$ yields a continuous flow $\{\psi^t_R\}_{t \in \bR}$ on $M_\phi$ called the \emph{suspension flow}. Its closed integral curves are in one-to-one correspondence with simple periodic orbits of $\phi$. If $\phi$ preserves a Borel measure $\mu$, the suspension flow preserves the measure $dt \otimes \mu$ on $M_\phi$. 

Suppose $\phi \in \Homeo_0(\Sigma)$. Choose an identity isotopy $\Phi = \{\phi_t\}_{t \in [0,1]}$. This choice defines a homeomorphism
\begin{equation} \label{eq:mt_iso} \eta: \mathbb{T} \times \Sigma \to M_\phi \end{equation}
by the map $[(t, p)] \mapsto [(t, \phi_t^{-1}(p))]$. This homeomorphism provides a useful method for recovering the rotation vector of a periodic orbit. Let $S = \{x_1, \ldots, x_k\} \subset \Sigma$ be a simple periodic orbit, and let $\gamma \subset M_\phi$ be the associated closed integral curve of the suspension flow. Using $\eta$ to realize $\gamma$ as a loop in $\mathbb{T} \times \Sigma$, its homology class is easily computed:
\begin{equation} \label{eq:mt_rotation_vectors} k^{-1} \cdot [\gamma] = [\mathbb{T}] + \mathcal{F}(\Phi, S) \in H_1(\mathbb{T} \times \Sigma; \bZ).\end{equation}

\section{Asymptotic cycles} \label{sec:asymptotic_cycles} This section discusses the asymptotic cycle construction, and does several useful computations. Fix a compact surface $\Sigma$, a map $\phi \in \Homeo(\Sigma)$, and a $\phi$-invariant Borel probability measure $\mu$. The \emph{asymptotic cycle}, which was introduced by Schwartzman \cite{schwartzman}, is a homology class $\cC(\phi, \mu) \in H_1(M_\phi; \bR)$. If $\phi$ is area-preserving, $\cC(\phi)$ denotes the asymptotic cycle of the normalized area measure. 

\subsection{Definition} We define $\cC(\phi, \mu)$ as a real-valued linear functional on $[M_\phi, \mathbb{T}]$. Fix any continuous $f: M_\phi \to \mathbb{T}$. For each $s \in \bR$, write $f_s := f \circ \psi^s_R$. The functions $f_s - f$ are a continuous family of null-homotopic circle-valued functions, so they lift to a unique continuous family $\{g_s\}_{s \in \bR}$ of functions $M_\phi \to \bR$ with $g_0 \equiv 0$. Kingman's subadditive ergodic theorem implies that $G := \lim_{s \to \infty} g_s/s$ is a well-defined $(dt \otimes \mu)$-integrable function. We set $\langle \cC(\phi, \mu), f \rangle$ to be the integral of $G$. This is linear and homotopy-invariant in $f$ (see \cite{schwartzman}), so it defines a real-valued linear functional on $H^1(M_\phi; \bZ)$, and therefore defines a class in $H_1(M_\phi; \bR)$. 

\begin{rem}
Fix any compact manifold $M$. An asymptotic cycle, taking values in $H_1(M; \mathbb{R})$, is defined as above for any choice of a continuous flow $\{\psi^t\}_{t \in \bR}$ and a $\psi$-invariant Borel probability measure. 
\end{rem}

\subsection{Maps isotopic to the identity} When $\phi \in \Homeo_0(\Sigma)$, the rotation vector of $\mu$ can be recovered from $\cC(\phi, \mu)$. 

\begin{lem} \label{lem:asymptotic_cycle_computation}
    Fix a compact surface $\Sigma$, a Borel probability measure $\mu$, and $\phi \in \Homeo_0(\Sigma, \mu)$. For any identity isotopy $\Phi$, the pullback of $\cC(\phi, \mu)$ by \eqref{eq:mt_iso} satisfies the identity 
\begin{equation} \label{eq:asymptotic_cycle_computation} \eta^* \cdot \cC(\phi, \mu) = [\mathbb{T}] + \mathcal{F}(\Phi, \mu) \in H_1(\mathbb{T} \times \Sigma; \mathbb{R}).\end{equation} 
\end{lem}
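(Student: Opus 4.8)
The plan is to compare two linear functionals on $[\mathbb{T}\times\Sigma,\mathbb{T}]\cong H^1(\mathbb{T}\times\Sigma;\mathbb{Z})$: the pullback $\eta^*\cC(\phi,\mu)$ and the claimed class $[\mathbb{T}]+\mathcal{F}(\Phi,\mu)$. By the Künneth decomposition $H^1(\mathbb{T}\times\Sigma;\mathbb{Z})\cong H^1(\mathbb{T};\mathbb{Z})\oplus H^1(\Sigma;\mathbb{Z})$, it suffices to check equality on (a) the class $d\theta$ pulled back from the $\mathbb{T}$-factor, and (b) classes pulled back from $\Sigma$, i.e. those represented by $f\circ\mathrm{pr}_\Sigma$ for $f\colon\Sigma\to\mathbb{T}$. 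In both cases I would compute the left side directly from the definition of the asymptotic cycle, transporting everything through $\eta$ so that the suspension flow on $M_\phi$ becomes the explicit flow on $\mathbb{T}\times\Sigma$ given by $\psi^s([(t,p)]) = [(t+s,p)]$ (this is exactly the conjugate of $\psi^s_R$ by $\eta$: one checks $\eta([(t+s,\phi_{t+s}^{-1}p)])$ against $\psi^s_R\eta([(t,\phi_t^{-1}p)])$ using the identification $(1,q)\sim(0,\phi q)$, noting the twisting is absorbed into the coordinate change).

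For case (a), take $f = \theta\circ\mathrm{pr}_\mathbb{T}\colon \mathbb{T}\times\Sigma\to\mathbb{T}$. Then $f_s - f = \theta(t+s)-\theta(t)$, which lifts to the constant function $g_s\equiv s$, so $G\equiv 1$ and $\langle\eta^*\cC(\phi,\mu),[d\theta]\rangle = \int 1\,d(dt\otimes\mu) = 1$. On the other side, $\langle[\mathbb{T}],[d\theta]\rangle = 1$ and $\langle\mathcal{F}(\Phi,\mu),[d\theta]\rangle = 0$ since $[d\theta]$ restricted to $\Sigma$ is trivial (the rotation vector $\mathcal{F}(\Phi,\mu)$ lives in $H_1(\Sigma;\mathbb{R})$ and pairs by definition only with circle-valued functions on $\Sigma$). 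So both sides pair to $1$.

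For case (b), take $f = u\circ\mathrm{pr}_\Sigma$ for $u\colon\Sigma\to\mathbb{T}$ continuous. Then $f_s([(t,p)]) = u(p)$ is independent of $s$ (the flow only moves the $\mathbb{T}$-coordinate), so $f_s - f\equiv 0$, giving $g_s\equiv 0$, $G\equiv 0$, and $\langle\eta^*\cC(\phi,\mu),[f]\rangle = 0$. Wait — this cannot be right on the nose, because $\mathcal{F}(\Phi,\mu)$ is generally nonzero; the subtlety is that $\mathrm{pr}_\Sigma\colon\mathbb{T}\times\Sigma\to\Sigma$ is the projection in the product coordinates, and the composite $\mathrm{pr}_\Sigma\circ\eta^{-1}\colon M_\phi\to\Sigma$ is \emph{not} $\psi^s_R$-invariant — it is exactly here that the isotopy $\Phi$ enters. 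Concretely, pulling a function $u\colon\Sigma\to\mathbb{T}$ back to $M_\phi$ via $\eta^{-1}$ and then flowing picks up the difference $u(\phi_{t+s}^{-1}(\cdot)) - u(\phi_t^{-1}(\cdot))$; for $s=1$ this is $u\circ\phi^{-1}\circ\phi_t^{-1} - u\circ\phi_t^{-1}$, i.e. a translate of $u\circ\phi - u$, whose lift has $\mu$-average equal to $-\langle\mathcal{F}(\Phi,\mu),[u]\rangle$ (the sign from using $\phi^{-1}$; I will fix orientations so the signs match the stated formula). Averaging the cocycle over one period and over $\Sigma$ with $\mu$, using $\phi$-invariance of $\mu$, yields $G$ with integral $\langle\mathcal{F}(\Phi,\mu),[u]\rangle$, matching $\langle[\mathbb{T}]+\mathcal{F}(\Phi,\mu),[f]\rangle = 0 + \langle\mathcal{F}(\Phi,\mu),[u]\rangle$.

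The main obstacle is bookkeeping: getting the conjugated suspension flow on $\mathbb{T}\times\Sigma$ exactly right, keeping track of which projection is flow-invariant and which is not, and pinning down all the signs and orientation conventions (the direction of $\Phi$, the choice of generator $d\theta$, whether one uses $\phi$ or $\phi^{-1}$ in $\eta$) so that the final answer is precisely $[\mathbb{T}]+\mathcal{F}(\Phi,\mu)$ rather than, say, $[\mathbb{T}]-\mathcal{F}(\Phi,\mu)$. A cleaner packaging, which I would use if the direct cocycle computation gets unwieldy, is to note that equation~\eqref{eq:mt_rotation_vectors} already establishes the desired identity on the subset of $H_1(\mathbb{T}\times\Sigma;\mathbb{R})$ dual to classes of periodic orbits, and that the asymptotic cycle is the unique class whose pairing with each $f$ is the $\mu$-average of the Schwartzman cocycle; since periodic-orbit measures and the Künneth generators together pin down any homology class, one reduces \eqref{eq:asymptotic_cycle_computation} to \eqref{eq:mt_rotation_vectors} plus the already-computed $\mathbb{T}$-direction, avoiding a second limit computation entirely.
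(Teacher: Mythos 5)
Your decomposition of $H^1(\mathbb{T}\times\Sigma;\mathbb{Z})$ into the $\mathbb{T}$- and $\Sigma$-directions is the same skeleton the paper uses, and case (a) is fine. But there is a genuine gap in case (b), and your proposed escape route does not close it.

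The first problem is the claim that the conjugate of $\psi^s_R$ by $\eta$ is the pure translation $(t,p)\mapsto(t+s,p)$, with ``the twisting absorbed into the coordinate change.'' This is false: unwinding $\eta^{-1}\circ\psi^s_R\circ\eta$ (extending the isotopy to $t\in\mathbb{R}$ by $\phi_t = \phi_{t-\lfloor t\rfloor}\phi^{\lfloor t\rfloor}$) gives $q_s(t,p) = \big(t+s,\ \phi_{t+s}\phi_t^{-1}(p)\big)$, so the $\Sigma$-factor \emph{is} moved. You catch this halfway through (``this cannot be right on the nose''), but the repair is left incoherent: the cocycle you then write, $u(\phi_{t+s}^{-1}(\cdot)) - u(\phi_t^{-1}(\cdot))$, has spurious inverses compared to the correct $u\big(\phi_{t+s}\phi_t^{-1}(p)\big)-u(p)$, and you explicitly defer ``pinning down all the signs and orientation conventions.'' You also never account for the measure: $\eta^*(dt\otimes\mu)=dt\otimes\mu_t$ with $\mu_t=(\phi_t)_*\mu$, and the argument that $\int g_1(t,\cdot)\,d\mu_t = \langle\mathcal{F}(\Phi,\mu),u\rangle$ requires identifying the slice lift with the displacement under the conjugated isotopy $\Phi^t = \{\phi_{\tau+t}\phi_t^{-1}\}_\tau$ ending at $\phi^{(t)}=\phi_t\phi\phi_t^{-1}$, then using homotopy-invariance of $\mathcal{F}$ to get $\mathcal{F}(\Phi^t,\mu_t)=\mathcal{F}(\Phi,\mu)$, and finally the cocycle relation $g_s = g_{s-1} + g_1\circ\phi^{(t)}$ to pass from $s=1$ to the $s\to\infty$ limit. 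None of these steps is optional, and none appears in the proposal.

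The ``cleaner packaging'' you offer as a fallback---reduce to equation~\eqref{eq:mt_rotation_vectors}---does not work. That equation computes the homology class of the suspension loop of a \emph{single periodic orbit}; it says nothing about $\cC(\phi,\mu)$ for an arbitrary invariant measure $\mu$, which is an ergodic average and generically has no periodic orbits in its support at all. Periodic-orbit measures do not span the space of invariant measures, so you cannot test the asymptotic cycle by pairing against them. What does pin down a class in $H_1(\mathbb{T}\times\Sigma;\mathbb{R})$ is pairing against a basis of $H^1$---which is exactly the direct cocycle computation you were trying to avoid. So the fallback is circular, and the direct computation has to be carried out honestly.
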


\begin{proof}
	Write $\Phi = \{\phi_t\}_{t \in [0,1]}$ and write $q_t := \eta^{-1} \circ \psi^t_R \circ \eta$, where we recall $\{\psi^t_R\}_{t \in \bR}$ is the suspension flow. Write $\mu_t := (\phi_t)_*(\mu)$ for every $t$. The pullback $\eta^* \cdot \cC(\phi, \mu)$ is the asymptotic cycle of the flow $\{q_t\}_{t \in \bR}$ with respect to $dt \otimes \mu_t = \eta^*(dt \otimes \mu)$. We extend the isotopy to a map $\Phi: \mathbb{R} \to \Homeo_0(\Sigma)$ by setting $\phi_t := \phi_{t - \lfloor t \rfloor}\phi^{\lfloor t \rfloor}$.For any $s \in \mathbb{R}$ and $t \in [0,1)$, we compute $q_s(t, p) = (s + t, \phi_{s + t}\phi_{t}^{-1}(p)) \in \mathbb{T} \times \Sigma$. 

The $\mathbb{T}$-invariant functions and the projection $\pi: \mathbb{T} \times \Sigma \to \mathbb{T}$ define a basis of $H^1(\mathbb{T} \times \Sigma; \bZ)$. The lemma is proved by showing
\begin{equation} \label{eq:asymptotic_cycle_computation_2} \langle \eta^* \cdot \cC(\phi, \mu), \pi \rangle = 1,\qquad\qquad \langle \eta^* \cdot \cC(\phi, \mu), f \rangle = \langle \mathcal{F}(\Phi, \mu), f \rangle\end{equation} 
for any $\mathbb{T}$-invariant $f: \mathbb{T} \times \Sigma \to \mathbb{T}$. The real-valued lift $g_s$ of $\pi \circ q_s - \pi$ is $g_s(t, p) = s$, so the integral of $g_s/s$ is always $1$. This proves the first identity in \eqref{eq:asymptotic_cycle_computation_2}. 

Fix any $\mathbb{T}$-invariant $f: \mathbb{T} \times \Sigma \to \mathbb{T}$.  Write $f_s := f \circ q_s$ and let $\{g_s\}_{s \in \bR}$ be the real-valued lift of the family $\{f_s - f\}_{s \in \bR}$. Fix any $t \in [0,1)$ and set $\phi^{(t)} \in \Homeo_0(\Sigma, \mu_t)$ to be the conjugate of $\phi$ by $\phi_t$. The function $(f_\tau - f)(t, -)$ is the displacement of $f(t, -)$ under the identity isotopy $\Phi^{t} := \{\phi_{\tau + t}\phi_t^{-1}\}_{\tau \in [0,1]}$ ending at $\phi^{(t)}$. It follows that the integral of $g_1(t, -)$ with respect to $\mu_t$ is $\mathcal{F}(\Phi^t, \mu_t)$.  We note $g_s(t, -) = g_{s - 1}(t, -) + g_1(t, -) \circ \phi^{(t)}$, so the $\mu_t$-integral of $g_s/s$ for $s \in \mathbb{N}$ is the same as the integral of $g_1$. The isotopy $\Phi^t$ is homotopic relative to its endpoints to the conjugated isotopy $\phi_t \circ \Phi \circ \phi_t^{-1}$, so $\mathcal{F}(\Phi^t, \mu_t) = \mathcal{F}(\Phi, \mu)$. Integrate over $t \in [0,1)$ and use the fact that $f$ is $\mathbb{T}$-invariant to show
$$\frac{1}{s}\int_0^1\big(\int_\Sigma g_s(t, -)\,d\mu_t\big)\,dt = \int_0^1 \langle \mathcal{F}(\Phi, \mu), f(t, -) \rangle\,dt = \langle \mathcal{F}(\Phi, \mu), f \rangle$$
for any $s \in \mathbb{N}$. This proves the second identity in \eqref{eq:asymptotic_cycle_computation_2}. 
\end{proof}

\subsection{Smooth maps} When $\phi \in \Diff(\Sigma, \omega)$, the area form $\omega$ defines a closed $2$-form $\omega_\phi$ on the mapping torus $M_\phi$, which restricts to $0$ on the boundary. Its cohomology class is Poincar\'e dual to the asymptotic cycle. 

\begin{lem} \label{lem:rational_pd}
    Assume $\phi \in \Diff(\Sigma, \omega)$. Then $[\omega_\phi] \in H^2(M_\phi, \partial M_\phi; \mathbb{R})$ is Poincar\'e dual to $\Big(\int_\Sigma \omega\Big) \cdot \cC(\phi)$. 
\end{lem}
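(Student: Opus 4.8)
The plan is to unwind the definition of the asymptotic cycle $\cC(\phi)$ as a linear functional on $[M_\phi, \mathbb{T}] \cong H^1(M_\phi; \bZ)$ and compare it against the functional on $H^1(M_\phi; \bZ)$ obtained by pairing with $\big(\int_\Sigma \omega\big) \cdot [\omega_\phi]$. Since $H_1(M_\phi; \bR)$ is identified with $\Hom([M_\phi, \mathbb{T}], \bR)$, it suffices to check that for every continuous $f: M_\phi \to \mathbb{T}$ one has
\begin{equation*}
\Big(\int_\Sigma \omega\Big) \cdot \langle \cC(\phi), f \rangle = \int_{M_\phi} f^* d\theta \wedge \omega_\phi.
\end{equation*}
The right-hand side is the de Rham pairing representing the Poincar\'e dual of $[\omega_\phi]$ evaluated against the class $f^* d\theta$, so establishing this identity proves the lemma. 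By a standard approximation/homotopy-invariance argument (both sides are homotopy invariant in $f$ and $C^0$-continuous), I can assume $f$ is smooth.

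First I would set up notation exactly as in the definition of the asymptotic cycle: for $s \in \bR$ let $f_s := f \circ \psi^s_R$, lift the null-homotopic family $f_s - f$ to the unique continuous family $g_s: M_\phi \to \bR$ with $g_0 \equiv 0$, and recall that $G := \lim_{s\to\infty} g_s/s$ is $(dt\otimes\mu)$-integrable with $\langle \cC(\phi), f\rangle = \big(\int_\Sigma\omega\big)^{-1} \int_{M_\phi} G \, dt\,\omega$ (using the normalized area measure $\mu = \big(\int_\Sigma\omega\big)^{-1}\omega$ restricted to slices, against $dt$ on the mapping torus). The key computation is to differentiate $g_s$ along the suspension flow: since $\frac{d}{ds} g_s = (\text{lift of } \frac{d}{ds}(f_s - f)) = \big(\frac{d}{ds} f_s\big)\big/d\theta$ pulled back appropriately, one gets $\frac{d}{ds}g_s = \big(\iota_R f^* d\theta\big)\circ\psi^s_R$ where $R$ is the suspension vector field. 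Hence $g_s = \int_0^s (\iota_R f^*d\theta)\circ\psi^\tau_R \, d\tau$, and by the Birkhoff ergodic theorem (or directly, since $dt\otimes\mu$ is $\psi$-invariant) the space average satisfies
\begin{equation*}
\int_{M_\phi} G\, (dt\otimes\mu) = \int_{M_\phi} \iota_R\big(f^* d\theta\big)\, (dt\otimes\mu).
\end{equation*}

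Finally I would identify the right-hand integral with $\big(\int_\Sigma\omega\big)^{-1}\int_{M_\phi} f^*d\theta\wedge\omega_\phi$. This is where the specific geometry of $\omega_\phi$ enters: on the mapping torus the invariant volume form $dt\otimes\mu$ is, up to the normalization factor $\big(\int_\Sigma\omega\big)^{-1}$, equal to $dt\wedge\omega_\phi$ interpreted correctly (more precisely $\omega_\phi$ restricts to $\omega$ on each slice $\{t\}\times\Sigma$ and $\iota_R\omega_\phi = 0$ since the suspension flow preserves $\omega_\phi$ and $\omega_\phi$ is the horizontal closed extension), so $\iota_R\big(f^*d\theta\big)\,(dt\wedge\omega_\phi) = f^*d\theta\wedge\omega_\phi$ using $\iota_R(dt) = 1$ and $\iota_R\omega_\phi = 0$. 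Assembling these gives the claimed duality. I expect the main obstacle to be bookkeeping the normalization constants and orientation conventions correctly, and being careful that $\omega_\phi$ genuinely satisfies $\iota_R\omega_\phi = 0$ and restricts to $\omega$ on slices — i.e. pinning down the precise definition of $\omega_\phi$ used in the paper — rather than any deep analytic difficulty; the subadditive ergodic theorem input is already packaged into the well-definedness of $\cC(\phi)$, so here I only need the ordinary ergodic/averaging identity for the invariant measure.
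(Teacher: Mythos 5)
Your proposal is correct and takes essentially the same approach as the paper: both unwind $\cC(\phi)$ as a linear functional via the lifts $g_s$, use invariance of $dt\wedge\omega_\phi$ under the suspension flow to show $\int_{M_\phi} g_s/s$ is constant in $s$, and identify that constant with $\int_{M_\phi} f^*d\theta\wedge\omega_\phi$ via $\iota_R\omega_\phi = 0$, $\iota_R dt = 1$. The only cosmetic difference is that you compute the constant directly by Fubini from $g_s = \int_0^s(\iota_R f^*d\theta)\circ\psi^\tau_R\,d\tau$, whereas the paper uses the cocycle relation $g_{2s}=g_s+g_s\circ\psi^s_R$ with a dyadic limit $s\to 0$.
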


\begin{proof}
	Let $d\theta$ denote the closed one-form on $\mathbb{T}$ with integral $1$. If $\phi$ is smooth, then a circle-valued function $f: M_\phi \to \mathbb{T}$ corresponds to $[f^*d\theta] \in H^1(M_\phi; \bR)$. The lemma follows from showing
    \begin{equation} \label{eq:rational_pd_1} \int_{M_\phi} f^*d\theta \wedge \omega_\phi = \Big(\int_\Sigma \omega\Big) \cdot \langle \cC(\phi), f \rangle \end{equation}
    for any $f: M_\phi \to \mathbb{T}$. Write $f_s = f \circ \psi^s_R$ for each $s \in \bR$, and let $\dot f_s = (f_s^*d\theta)(R): M_\phi \to \bR$ denote the time derivative. The associated real-valued lifts are $g_s := \int_0^s \dot f_\tau d\tau$ for $s > 0$. For any $s > 0$, $g_{2s} = g_{s} + g_{s} \circ \psi^{s}_R$. Since $dt \wedge \omega_\phi$ is $R$-invariant, this implies that the integral of $g_{2s}/2s$ over $M_\phi$ is equal to the integral of $g_{s}/s$. Repeated division by $2$ shows
\begin{align*} \int_{M_\phi} \frac{1}{s}g_s\,dt\wedge \omega_\phi &= \lim_{\tau \to 0} \int_{M_\phi} \frac{1}{\tau}g_\tau\,dt\wedge\omega_\phi = \int_{M_\phi} \dot f_0\,dt \wedge \omega_\phi \\
&= \int_{M_\phi} (f^*d\theta)(R)\,dt\wedge\omega_\phi = \int_{M_\phi} f^*d\theta \wedge \omega_\phi
\end{align*}
for any $s > 0$. This proves \eqref{eq:rational_pd_1}.  
\end{proof}

\subsection{Behavior under iteration} We show that rationality of $\cC(\phi, \mu)$ is preserved under iteration. 

\begin{lem}\label{lem:asymptotic_cycle_iteration}
Fix any $k \in \mathbb{N}$. If $\cC(\phi, \mu) \in H_1(M_\phi; \mathbb{Q})$, then $\cC(\phi^k, \mu) \in H_1(M_{\phi^k}; \mathbb{Q})$. 
\end{lem}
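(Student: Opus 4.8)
The plan is to realize $M_{\phi^k}$ as a $k$-fold cyclic cover of $M_\phi$, transport the asymptotic cycle across this cover, and then descend the rationality statement using a transfer argument. First I would construct the covering: writing both mapping tori as quotients of $\bR_t \times \Sigma$ as in Section~\ref{sec:prelim}, define $\rho \colon M_{\phi^k} \to M_\phi$ by sending the class of $(t,p)$ with $t \in [0,1)$ to the class of $(kt - j,\, \phi^j(p))$, where $j = \lfloor kt\rfloor \in \{0,\dots,k-1\}$. A direct check shows $\rho$ is a well-defined $k$-fold covering map; that it intertwines the suspension flows via $\rho\circ\psi^t_R = \psi^{kt}_R\circ\rho$ (over one unit of time $M_{\phi^k}$ closes up by $\phi^k$ while $M_\phi$ closes up by $\phi$, so the factor $k$ is forced); and that the deck group is cyclic of order $k$, generated by the homeomorphism $T$ sending the class of $(t,p)$ to that of $(t + \tfrac{1}{k},\, \phi^{-1}(p))$. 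Using $\phi_*\mu = \mu$, one checks further that $T$ commutes with the suspension flow and preserves the measure $dt \otimes \mu$, and that $\rho$ pushes the normalized invariant measure on $M_{\phi^k}$ forward to the one on $M_\phi$.

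Next I would transport $\cC$. Unwinding the definition of the asymptotic cycle through $\rho$ — exactly as in the proof of Lemma~\ref{lem:asymptotic_cycle_computation}, and using that rescaling the flow by $k$ rescales the asymptotic cycle by $k$ — yields the identity
\[
\rho_* \cC(\phi^k, \mu) = k \cdot \cC(\phi, \mu) \in H_1(M_\phi; \bR).
\]
On the other hand, since $T$ commutes with the suspension flow and preserves the measure, the naturality of the asymptotic cycle (the lifted displacement functions defining $\cC$ pull back under such a $T$) gives $T_* \cC(\phi^k,\mu) = \cC(\phi^k,\mu)$; that is, $\cC(\phi^k,\mu)$ lies in the deck-invariant subspace $H_1(M_{\phi^k};\bR)^{\mathbb{Z}/k}$.

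Finally I would descend. The transfer map $\tau_! \colon H_1(M_\phi;\mathbb{Q}) \to H_1(M_{\phi^k};\mathbb{Q})$ of the finite cover $\rho$ satisfies $\rho_*\circ\tau_! = k\cdot\mathrm{id}$, has image contained in $H_1(M_{\phi^k};\mathbb{Q})^{\mathbb{Z}/k}$, and satisfies $\tau_!\circ\rho_* = \sum_{g}g_*$ with the sum over the deck group. These identities force $\rho_*$ to restrict to a $\mathbb{Q}$-linear isomorphism $H_1(M_{\phi^k};\mathbb{Q})^{\mathbb{Z}/k} \xrightarrow{\sim} H_1(M_\phi;\mathbb{Q})$, hence an $\bR$-linear isomorphism on the corresponding real subspaces that carries rational classes to rational classes. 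Since $\cC(\phi^k,\mu)$ is deck-invariant and $\rho_*\cC(\phi^k,\mu) = k\cdot\cC(\phi,\mu)$ is rational by hypothesis, applying the inverse of this isomorphism gives $\cC(\phi^k,\mu) \in H_1(M_{\phi^k};\mathbb{Q})$, as desired.

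I expect the only genuine obstacle to be the bookkeeping in the first step: writing down the correct formulas for $\rho$ and the deck transformation $T$, and in particular correctly tracking the factor of $k$ coming from the time rescaling of the suspension flow, together with checking that the various ``naturality'' statements for the asymptotic cycle really apply given that $\cC$ is built from a limit of lifted displacement functions rather than an honest cohomological pairing. The transfer-map bookkeeping in the last step is entirely routine, and could alternatively be replaced by a direct averaging argument showing that a deck-invariant functional on $H^1(M_{\phi^k};\mathbb{Q})$ is determined by its restriction to $\rho^*H^1(M_\phi;\mathbb{Q})$.
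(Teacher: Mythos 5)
Your proof is correct and follows essentially the same route as the paper's: the same $k$-fold covering $M_{\phi^k}\to M_\phi$, the same pushforward identity $\rho_*\cC(\phi^k,\mu)=k\cdot\cC(\phi,\mu)$ obtained from the flow commutation relation and time-rescaling, and the same deck-invariance of $\cC(\phi^k,\mu)$ coming from $T$ preserving $dt\otimes\mu$. The only difference is presentational: the paper verifies that the functional $\cC(\phi^k,\mu)$ vanishes on classes of the form $f\circ T-f$ and is rational on classes pulled back via $\pi_k$, then uses that these span, whereas you package the same two facts into the transfer-map isomorphism $H_1(M_{\phi^k};\mathbb{Q})^{\mathbb{Z}/k}\xrightarrow{\sim}H_1(M_\phi;\mathbb{Q})$ — a dual formulation of the identical linear algebra over $\mathbb{Q}$.
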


\begin{proof}
There is a covering map $\pi_k: M_{\phi^k} \to M_\phi$ with deck group $\mathbb{Z}/k\mathbb{Z}$, given by the map $[(t,p)] \mapsto [(kt - \lfloor kt \rfloor, \phi^{\lfloor kt \rfloor}(p))]$. The deck group is generated by the map $T: [(t, p)] \mapsto [(t - 1/k, \phi(p))]$. Denote the suspension flows of $\phi$ and $\phi^k$ by $\{\psi^t\}$, $\{\psi^t_k\}$ respectively. The group $[M_{\phi^k}, \mathbb{T}]$ is spanned by functions of the form $f \circ T - f$ and those which are pulled back by $\pi_k$ from $M_\phi$. The suspension flow of $\phi^k$ commutes with the covering translations, so if $\{g_s\}_{s \in \bR}$ denotes the real-valued lifts of the family $\{f \circ \psi^s_k - f\}_{s \in \bR}$, then $\{g_s \circ T\}_{s \in \bR}$ are the real-valued lifts of $\{f \circ T \circ \psi^s_k - f \circ T\}_{s \in \bR}$. The map $T$ preserves $dt \otimes \mu$, so $g_s \circ T$ and $g_s$ have the same integral. We conclude
$$\langle \cC(\phi^k), f \circ T - f \rangle = 0.$$

It remains to consider functions pulled back from $M_\phi$. Note that for any $f: M_\phi \to \mathbb{T}$, the pairing $\langle \cC(\phi^k, \mu), f \circ \pi_k \rangle$ is equal to $\langle (\pi_k)_* \cdot \cC(\phi^k, \mu), f \rangle$. Since we are assuming $\cC(\phi)$ is rational, the rationality of $\cC(\phi^k)$ therefore follows from the identity
\begin{equation} \label{eq:asymptotic_cycle_iteration} (\pi_k)_* \cdot  \cC(\phi^k, \mu) = k \cdot \cC(\phi, \mu). \end{equation}

The key observation here is the commutation relation $\pi_k \circ \psi_k^{t/k} = \psi^t \circ \pi_k$. This shows that the asymptotic cycle of the flow $\{\psi_k^{t/k}\}_{t \in \bR}$ pushes forward to $\cC(\phi)$. Rescaling a flow in time by a factor of $\lambda$ also multiplies the asymptotic cycle by $\lambda$. The asymptotic cycle of $\{\psi_k^{t/k}\}_{t \in \bR}$ is $k^{-1} \cdot \cC(\phi^k)$, so this proves \eqref{eq:asymptotic_cycle_iteration}. 
\end{proof}

\section{PFH and proofs of main theorems} \label{sec:pfh}

\subsection{Overview of PFH and non-vanishing}
Fix a closed surface $\Sigma$ and a smooth area form $\omega$. Fix an area-preserving diffeomorphism $\phi \in \Diff(\Sigma, \omega)$. The area form $\omega$ defines a closed 2-form on the mapping torus $M_\phi$, denoted by $\omega_\phi$. Let $t$ be the coordinate for the interval component of $[0,1] \times \Sigma$. Then $dt$ pushes forward to a smooth 1-form on $M_\phi$. The pair $(dt,\omega_\phi)$ forms a stable Hamiltonian structure on $M_\phi$, and the Reeb vector field is a smooth vector field $R$ generating the suspension flow $\{\psi^t_R\}_{t \in \bR}$. The associated two-plane bundle $\ker(dt)$ is equal to the vertical tangent bundle of the fibration $M_\phi \to \mathbb{T}$, which we denote by $V$. 

Fix some nonzero homology class $\Gamma \in H_1(M_\phi; \bZ)$. The \emph{PFH generators} are
 finite sets
$\Theta = \{(\gamma_i, m_i)\}$
of pairs of embedded Reeb orbits $\gamma_i$ and multiplicities $m_i \in \mathbb{N}$ which satisfy the following three conditions: (1) the orbits $\gamma_i$ are distinct, (2) the multiplicity $m_i$ is $1$ whenever $\gamma_i$ is a hyperbolic orbit, (3) $\sum_i m_i[\gamma_i] = \Gamma$. 
The chain complex $\PFC_*(\phi, \Gamma)$ is the free module over a commutative coefficient ring\footnote{This can be anything when $\Gamma$ solves \eqref{eq:gammad} for some $d$, but in general $\Lambda$ must be a Novikov ring.} $\Lambda$ generated by the set of all PFH generators. 

The differential on $\PFC_*(\phi, \Gamma)$ counts ``ECH index 1" $J$-holomorphic currents between orbit sets. The homology of this chain complex is the \emph{periodic Floer homology} $\PFH_*(\phi, \Gamma)$. This homology theory was constructed by Hutchings \cite{Hutchings02}; see \cite{HutchingsNotes} for a detailed exposition of the closely related theory of \emph{embedded contact homology}.  The PFH group depends only on the Hamiltonian isotopy class of $\phi$; this allows us to define PFH for a degenerate map as the PFH of any sufficiently close nondegenerate Hamiltonian perturbation. We now precisely state the non-vanishing theorem for PFH. 
 
\begin{prop}[{\cite[Theorem $1.4$]{closing}}] \label{prop:pfh_nonvanishing} Fix a closed surface $\Sigma$ of any genus $g$ and a smooth area form $\omega$. Fix any area-preserving diffeomorphism $\phi \in \Diff(\Sigma, \omega)$. Then for any $d > \max(2g - 2, 0)$ and any class $\Gamma \in H_1(M_\phi; \mathbb{Z})$ satisfying 
\begin{equation} \label{eq:gammad} \text{PD}(\Gamma) = \Big(\int_\Sigma \omega\Big)^{-1}(d + 1 - g)[\omega_\phi] - \frac{1}{2}c_1(V)\end{equation}
the group $PFH(\phi, \Gamma)$ with $\mathbb{Z}/2$-coefficients is nonzero. 
\end{prop}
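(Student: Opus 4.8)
The plan is to deduce the non-vanishing of $\PFH(\phi,\Gamma)$ from the isomorphism of Lee--Taubes \cite{lee_taubes} between periodic Floer homology and a version of the monopole Floer homology of $M_\phi$, together with Taubes's analysis of the Seiberg--Witten equations; this is therefore a sketch of the proof of \cite[Theorem~1.4]{closing}. The first step is to record the geometric meaning of the hypothesis \eqref{eq:gammad}: the class $\Gamma$ it picks out is the unique one for which the associated spin-c structure $\mathfrak{s}_\Gamma$ on $M_\phi$ has $c_1$ a \emph{positive} multiple of $[\omega_\phi]$. Indeed, substituting \eqref{eq:gammad} into $c_1(\mathfrak{s}_\Gamma) = c_1(V) + 2\,\text{PD}(\Gamma)$ gives $c_1(\mathfrak{s}_\Gamma) = 2\big(\int_\Sigma\omega\big)^{-1}(d+1-g)[\omega_\phi]$, so pairing with the fiber $[\Sigma] \in H_2(M_\phi;\mathbb{Z})$ yields $\langle c_1(\mathfrak{s}_\Gamma), [\Sigma]\rangle = 2(d+1-g)$, which is positive throughout the range $d > \max(2g-2,0)$. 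This proportionality is exactly the condition under which the ECH index of an index-$1$ holomorphic current and its $\omega_\phi$-energy differ by a constant depending only on the current's relative homology class; it is what lets the field $\mathbb{Z}/2$, rather than a Novikov ring, be used as the coefficient ring $\Lambda$.

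With this in hand, I would invoke Lee--Taubes to identify $\PFH_*(\phi,\Gamma)$ with the monopole Floer homology of $(M_\phi,\mathfrak{s}_\Gamma)$ computed from the Seiberg--Witten equations perturbed by a large multiple of $\omega_\phi$, reducing the theorem to non-vanishing of that group. The perturbation is essential here: since $\langle c_1(\mathfrak{s}_\Gamma),[\Sigma]\rangle = 2(d+1-g) > 2g-2$ in the given range, the adjunction inequality applied to the fiber shows that the \emph{unperturbed} monopole Floer homology of $(M_\phi,\mathfrak{s}_\Gamma)$ vanishes, so the non-vanishing must genuinely exploit the large-$\omega_\phi$ perturbation. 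For the perturbed theory I would appeal to Taubes's work on the Seiberg--Witten equations on mapping tori: when $c_1(\mathfrak{s}_\Gamma)$ is a positive multiple of $[\omega_\phi]$, the large-parameter equations acquire a distinguished solution built from the symplectic geometry, much as the symplectic form of a closed symplectic four-manifold forces a nonzero Seiberg--Witten invariant; concretely, one relates this Floer homology to the Seiberg--Witten/Gromov invariants of the closed symplectic four-manifold $M_\phi \times S^1$ equipped with $\omega_\phi + dt \wedge d\theta$, which are nonzero in the canonical spin-c structure and in those obtained from it by imposing $d$ generic point constraints. Transporting the resulting nonzero class back through the Lee--Taubes isomorphism gives $\PFH(\phi,\Gamma) \neq 0$ over $\mathbb{Z}/2$.

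The main obstacle is entirely analytic and is concentrated in this second step: both the Lee--Taubes isomorphism and Taubes's construction of the nonzero Seiberg--Witten Floer class rest on Taubes's a priori estimates, compactness, and gluing theory for the perturbed equations, none of which can be reproduced briefly. A secondary but genuine difficulty is the careful bookkeeping of grading and spin-c conventions needed to check that the specific $\Gamma$ of \eqref{eq:gammad} is the one carrying the nonzero invariant, and that $d > \max(2g-2,0)$ is the correct range for the construction. For the present paper the sensible course, and the one I would take, is to cite \cite[Theorem~1.4]{closing}, which assembles exactly this package.
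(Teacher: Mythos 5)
Your proposal and the paper take essentially the same route: both ultimately cite \cite[Theorem~1.4]{closing}, and your identification $c_1(\mathfrak{s}_\Gamma) = 2\big(\int_\Sigma\omega\big)^{-1}(d+1-g)[\omega_\phi]$, the reduction to non-vanishing of the relevant (perturbed) monopole Floer group via Lee--Taubes, and the deferral to Taubes-type Seiberg--Witten analysis all match the structure of that reference. The one refinement the paper makes explicit, which you fold into a vaguer remark about ``bookkeeping,'' is where the precise threshold $d > \max(2g-2,0)$ comes from: \cite{closing} only states non-vanishing for $d$ sufficiently large, and the paper observes that the explicit bound is exactly what Lee--Taubes \cite[Theorem~1.2, Corollary~1.5]{lee_taubes} require for $\PFH(\phi,\Gamma)$ to be isomorphic to the ``bar'' flavor $\overline{\HM}$ of monopole Floer homology, whose non-vanishing for the given spin-c structure is what drives the whole argument. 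It would be worth stating that dependency plainly rather than attributing the range to grading conventions, since the bound is doing real work and is not present in the cited theorem as written.
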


The result as stated in \cite{closing} only asserts non-vanishing for $d$ sufficiently large, but the explicit lower bound is not difficult to extract once the details are understood. We only need $d$ large enough to ensure that $\PFH$ is isomorphic to the ``bar'' version $\overline{\text{HM}}$ of monopole Floer homology. Lee--Taubes \cite[Theorem $1.2$, Corollary $1.5$]{lee_taubes} prove this isomorphism assuming $d > \max(2g - 2, 0)$.  The non-vanishing theorem is a key ingredient in the following technical result.

\begin{prop}\label{prop:non_contractible_technical}
Fix a closed surface $\Sigma$ of genus $g \geq 2$ and a smooth area form $\omega$. Let $\phi \in \Homeo_0(\Sigma, \omega)$ be such that there exists nonzero $h \in H_1(\Sigma; \bZ)$ and a positive real number $c > 0$ satisfying $\mathcal{F}(\phi) = c \cdot h$. Then for any rational number $p/q \in (0, c]$ with $q > g - 1$, $\phi$ has a non-contractible periodic point with minimal period $\leq q + g - 1$. 
\end{prop}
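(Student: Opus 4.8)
The plan is to first reduce the homeomorphism case to the diffeomorphism case by a $C^0$-approximation argument, then handle the diffeomorphism case using the blow-up construction described in the introduction to scale the rotation vector into $H_1(\Sigma;\mathbb{Q})$, and finally invoke the PFH non-vanishing theorem (Proposition~\ref{prop:pfh_nonvanishing}) to produce a periodic orbit of controlled period whose rotation vector is nonzero, hence (by \eqref{eq:rotation_vector_orbit}) non-contractible. Let me sketch each piece.

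\textbf{Step 1: the smooth case with a rational rotation vector.} Suppose first $\phi\in\Diff(\Sigma,\omega)$ and $\mathcal{F}(\phi)=p/q\cdot h'$ lies in $H_1(\Sigma;\mathbb{Q})$, with $h'$ primitive and $q$ its denominator (I will reconcile this with the general statement in Step~3). By Lemma~\ref{lem:rational_pd}, $[\omega_\phi]$ is a rational class in $H^2(M_\phi;\mathbb{R})$, and one computes that for a suitable $d$ comparable to $q+g-1$ the class $\Gamma$ defined by \eqref{eq:gammad} is an honest integral class, with $\text{PD}(\Gamma)=[\mathbb{T}^*]\cdot(\text{something})+(d+1-g)\cdot(\text{flux term})$; crucially its image under $H_1(M_\phi;\mathbb{Z})\to H_1(\mathbb{T};\mathbb{Z})=\mathbb{Z}$ (the "degree") equals $d$. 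By Proposition~\ref{prop:pfh_nonvanishing}, $PFH(\phi,\Gamma)\neq 0$, so there is at least one PFH generator $\Theta=\{(\gamma_i,m_i)\}$ with $\sum m_i[\gamma_i]=\Gamma$. The corresponding periodic orbit set of $\phi$ has total period $d$ and total rotation vector (via \eqref{eq:mt_rotation_vectors}, summing over the orbits) equal to $d[\mathbb{T}]+(d+1-g)\mathcal{F}(\phi)$ minus the $\tfrac12 c_1(V)$ correction; the point is that the $\Sigma$-component of $[\Gamma]$ is a fixed nonzero multiple of $h'$ (of the order of $d$), so \emph{not every} $\gamma_i$ can have zero rotation vector — some embedded orbit $\gamma_i$ has $\mathcal{F}(\Phi,\gamma_i)\neq 0$. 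Its period is at most $d\lesssim q+g-1$, and by the remark after \eqref{eq:rotation_vector_orbit} the underlying periodic point is non-contractible. The arithmetic here — choosing $d$ minimal subject to $\Gamma$ being integral, $d>2g-2$, and the $h'$-component being nonzero, and checking the bound comes out as $q+g-1$ — is the step I expect to require the most care.

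\textbf{Step 2: from $p/q\cdot h$ to a genuinely rational vector via blow-up.} In general $\mathcal{F}(\phi)=c\cdot h$ with $c$ irrational, and we are handed a rational $p/q\in(0,c]$. Since $\Sigma$ has genus $\geq 2$, $\phi$ has a fixed point $z$; if $z$ is already non-contractible we are done (with period $1\leq q+g-1$), so assume it is contractible. Following the introduction, blow up $z$ and glue in a disk of area $a$, extending the isotopy $\Phi$ across the enlarged surface $\Sigma'$ (still genus $g$, one extra boundary/puncture filled). The extended map $\phi'$ has $\mathcal{F}(\phi')=\lambda(a)\cdot h$ for a continuous $\lambda(a)$ ranging over $(0,c)$ as $a$ ranges over its allowed interval; choose $a$ so that $\lambda(a)=p/q$, which is possible since $p/q\in(0,c)$ (the endpoint $c$ is harmless — if $p/q=c$ perturb, or note $c$ itself can only be rational in degenerate situations already covered). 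Apply Step~1 to $\phi'$: it has a non-contractible periodic point of period $\leq q+g-1$. The exceptional disk is an invariant region on which $\phi'$ is (conjugate to) a rotation, so any periodic point inside it is contractible; hence the non-contractible periodic point lies in the image of $\Sigma\setminus\{z\}$, and pushes back to a non-contractible periodic point of $\phi$ with the same period bound.

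\textbf{Step 3: the homeomorphism case by approximation.} Finally, let $\phi\in\Homeo_0(\Sigma,\omega)$ as in the statement. Since $\Diff(\Sigma,\omega)$ is $C^0$-dense in $\Homeo(\Sigma,\omega)$ and $\mathcal{F}$ is $C^0$-continuous, take $\phi_n\to\phi$ in $C^0$ with $\phi_n\in\Diff(\Sigma,\omega)$; after a further small Hamiltonian perturbation (which does not change $\mathcal{F}$) we may arrange $\mathcal{F}(\phi_n)=c_n\cdot h$ with $c_n\to c$ and $c_n>p/q$ for all large $n$ — here one uses that $\Gamma_\omega=\{0\}$ in genus $\geq 2$ so $\mathcal{F}$ is genuinely $H_1(\Sigma;\mathbb{R})$-valued and the line $\mathbb{R}\cdot h$ can be preserved. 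By Steps~1--2, each $\phi_n$ has a periodic point $z_n$ of period $k_n\leq q+g-1$ which is non-contractible, i.e. the loop $\gamma_{z_n}$ (built from $\phi_n$ and its identity isotopy) is non-contractible; equivalently, since genus $\geq 2$, $\mathcal{F}(\Phi_n,z_n)\neq 0$, and in fact this rotation vector is a nonzero element of the fixed lattice $\tfrac1{k_n}H_1(\Sigma;\mathbb{Z})$ with $k_n$ bounded, so it is bounded away from $0$. Passing to a subsequence, $k_n\equiv k$ is constant and $z_n\to z$ with $\phi^k(z)=z$; the loops $\gamma_{z_n}$ converge to $\gamma_z$, and since non-triviality in $H_1(\Sigma;\mathbb{Z})$ is detected by pairing against a fixed finite set of cohomology classes and this pairing is continuous and integer-valued, $\gamma_z$ is non-contractible. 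Thus $z$ is a non-contractible periodic point of $\phi$ with minimal period $\leq k\leq q+g-1$, completing the proof. The main obstacle, as flagged, is the precise bookkeeping in Step~1 ensuring the period bound is exactly $q+g-1$ and that the non-vanishing of PFH really forces a non-null-homologous (in $\Sigma$) embedded orbit rather than merely a null-homologous orbit set.
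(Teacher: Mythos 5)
Your overall plan --- PFH non-vanishing for the smooth rational case, blow-up to scale the rotation vector into $H_1(\Sigma;\mathbb{Q})$, then $C^0$-approximation to pass to homeomorphisms --- is exactly the paper's, and Steps 1 and 2 are sound modulo the arithmetic you flag. That arithmetic resolves as you'd hope: with $\mathcal{F}(\hat{\phi}) = (p/q)\,h$, the smallest integer $d > 2g-2$ making $(d+1-g)\,\mathcal{F}(\hat{\phi})$ integral is at most $q+g-1$, since the congruence $d \equiv g-1 \pmod{q}$ has smallest candidate $g-1$ (which fails $d>2g-2$ for $g\geq 2$) and next candidate $g-1+q$, which exceeds $2g-2$ precisely when $q>g-1$, the stated hypothesis. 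The PFH generator in class $\Gamma = d[\mathbb{T}] + (d+1-g)\mathcal{F}(\hat{\Phi})$ then gives simple orbits $S_i$ of periods $k_i$ and multiplicities $m_i$ with $\sum m_i k_i = d$ and $\sum m_i k_i\,\mathcal{F}(\hat{\Phi},S_i) = (d+1-g)\mathcal{F}(\hat{\phi}) \neq 0$, so some $S_i$ has nonzero rotation vector and period $k_i \leq d \leq q+g-1$ --- this is the paper's Lemma~\ref{lem:pfh_nonvanishing}, and it settles your main worry.

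The genuine gap is in Step 3. You write that ``after a further small Hamiltonian perturbation (which does not change $\mathcal{F}$) we may arrange $\mathcal{F}(\phi_n)=c_n\cdot h$,'' but this is self-defeating: precisely because Hamiltonian perturbations preserve $\mathcal{F}$, they cannot steer $\mathcal{F}(\phi_n)$ onto the line $\mathbb{R}\cdot h$ if an arbitrary $C^0$-approximating diffeomorphism has rotation vector off that line, as it generically will. The fix the paper uses is to choose once and for all a diffeomorphism $\psi \in \Diff_0(\Sigma,\omega)$ with $\mathcal{F}(\psi)=\mathcal{F}(\phi)$, note that $\psi^{-1}\circ\phi$ then has zero rotation vector and hence lies in $\overline{\operatorname{Ham}}(\Sigma,\omega)$ by Fathi's theorem, take Hamiltonian diffeomorphisms $h_k\to\psi^{-1}\circ\phi$ in $C^0$, and set $\phi_k:=\psi\circ h_k$. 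These converge to $\phi$ in $C^0$ and satisfy $\mathcal{F}(\phi_k)=\mathcal{F}(\phi)$ exactly, which also dispenses with the fuss about arranging $c_n>p/q$ and the endpoint $p/q=c$. Your subsequent limiting argument --- uniformly bounded periods, rotation vectors lying in a fixed lattice and hence bounded away from zero, continuity of the orbit-loop homology class under $C^0$-limits --- is correct and matches the paper.
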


The same argument proves an analogue for smooth torus maps, but we need to rule out maps without fixed points in the statement. 

\begin{prop}\label{prop:non_contractible_torus_technical}
Fix $\phi \in \Diff_0(\mathbb{T}^2, dx \wedge dy)$. Assume that $\phi$ has at least one fixed point. Assume further that there exists an identity isotopy $\Phi$, nonzero $h \in H_1(\Sigma; \bZ)$ and a positive real number $c > 0$ satisfying $\mathcal{F}(\Phi) = c \cdot h$. Then for any rational number $p/q \in (0, c]$, $\phi$ has a $\Phi$-non-contractible periodic point with minimal period $\leq q$. 
\end{prop}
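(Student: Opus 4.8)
The plan is to reduce to a torus carrying a \emph{rational} mass flow by blowing up a fixed point, apply the periodic Floer homology non-vanishing theorem there, and transport the resulting short periodic orbit back down. First, if the given fixed point $p$ is $\Phi$-non-contractible we are already done, since it has minimal period $1\le q$; so assume $\gamma_p$ is contractible. Then $\Phi$ is homotopic rel endpoints to an identity isotopy, still denoted $\Phi=\{\phi_t\}$, with $\phi_t(p)=p$ for all $t$, which does not change $\mathcal{F}(\Phi)$. Now blow up $\mathbb{T}^2$ at $p$ using the linearizations $d(\phi_t)_p$, glue a disk $D$ of area $A\ge 0$ along the exceptional circle to obtain a new area form of total area $\int_{\mathbb{T}^2}\omega+A$ on a surface $\tilde\Sigma$ which is again a torus, and extend the isotopy over $\tilde\Sigma$ so that each $\tilde\phi_t$ preserves $\bar D$, ending at $\tilde\phi\in\Diff_0(\tilde\Sigma)$. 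The blow-down $\pi\colon\tilde\Sigma\to\mathbb{T}^2$ collapsing $\bar D$ to $p$ has degree one, satisfies $\pi\circ\tilde\phi_t=\phi_t\circ\pi$ for all $t$, and induces isomorphisms $\pi_*$ on $H_1(-;\bZ)$ and $\pi^*$ on $H^1(-;\bR)$. A direct computation with the definition of the mass flow shows $\pi_*\mathcal{F}(\tilde\Phi)=\lambda\cdot\mathcal{F}(\Phi)$, where $\lambda=\frac{\int_{\mathbb{T}^2}\omega}{\int_{\mathbb{T}^2}\omega+A}\in(0,1]$; the factor $\lambda$ appears because the mass flow uses the area measure normalized to total mass one and the operation increased the total area by $A$, while the flux class itself is unchanged since $H^1(D;\bR)=0$. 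Given the rational number $p/q\in(0,c]$, choose $A\ge 0$ so that $\lambda c=p/q$ (possible since $\lambda=\frac{p}{qc}\in(0,1]$; if $p/q=c$, so $c$ is rational, no blow-up is needed and one argues directly on $\mathbb{T}^2$). Then $\mathcal{F}(\tilde\Phi)=(p/q)\cdot h'$ with $h':=\pi_*^{-1}(h)\in H_1(\tilde\Sigma;\bZ)$.

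Next I would apply Proposition~\ref{prop:pfh_nonvanishing} to $\tilde\phi$ with genus $g=1$ and $d=q\ge 1$. Since $\tilde\phi$ is isotopic to the identity, the vertical bundle $V$ over $M_{\tilde\phi}$ is trivial and $c_1(V)=0$; combined with Lemmas~\ref{lem:rational_pd} and \ref{lem:asymptotic_cycle_computation}, the class $\Gamma$ prescribed by \eqref{eq:gammad} corresponds under the identification \eqref{eq:mt_iso} to $q[\mathbb{T}]+p\,h'\in H_1(\mathbb{T}\times\tilde\Sigma;\bZ)$, a genuine nonzero integral class as $p,q\in\bZ$. Hence $\PFH(\tilde\phi,\Gamma)\ne 0$, so there is a PFH generator $\Theta=\{(\gamma_i,m_i)\}$ with $\sum_i m_i[\gamma_i]=\Gamma$. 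The embedded Reeb orbits $\gamma_i$ correspond to distinct simple periodic orbits $S_i$ of $\tilde\phi$ of minimal period $q_i$, and \eqref{eq:mt_rotation_vectors} gives $[\gamma_i]=q_i[\mathbb{T}]+q_i\,\mathcal{F}(\tilde\Phi,S_i)$; comparing the $H_1(\mathbb{T})$- and $H_1(\tilde\Sigma)$-components of $\sum_i m_i[\gamma_i]=q[\mathbb{T}]+p\,h'$ yields $\sum_i m_i q_i=q$ and $\sum_i m_i q_i\,\mathcal{F}(\tilde\Phi,S_i)=p\,h'\ne 0$. Consequently some $S_{i_0}$ has $\mathcal{F}(\tilde\Phi,S_{i_0})\ne 0$, hence is $\tilde\Phi$-non-contractible by the remark following \eqref{eq:rotation_vector_orbit}, with minimal period $q_{i_0}\le\sum_i m_i q_i=q$.

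Finally I would descend back to $\mathbb{T}^2$. Since $\bar D$ is a disk in the torus $\tilde\Sigma$ and each $\tilde\phi_t$ preserves $\bar D$, every periodic orbit meeting $\bar D$ is contained in $\bar D$ and has a contractible $\tilde\Phi$-loop; thus $S_{i_0}\subset\tilde\Sigma\setminus\bar D$, on which $\pi$ restricts to a homeomorphism onto $\mathbb{T}^2\setminus\{p\}$. Then $S:=\pi(S_{i_0})$ is a simple periodic orbit of $\phi$ of minimal period $q_{i_0}\le q$, its $\Phi$-loop equals $\pi(\gamma_{S_{i_0}})$, and $[\pi(\gamma_{S_{i_0}})]=\pi_*[\gamma_{S_{i_0}}]\ne 0$ because $[\gamma_{S_{i_0}}]\ne 0$ and $\pi_*$ is an isomorphism; hence $S$ is $\Phi$-non-contractible, and any of its points is the required periodic point.

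I expect the main obstacle to be the blow-up construction in the first step: carrying out the extension of the isotopy over $D$ area-preservingly while keeping $\bar D$ invariant, and verifying the scaling identity $\pi_*\mathcal{F}(\tilde\Phi)=\lambda\,\mathcal{F}(\Phi)$. This is precisely where smoothness of $\phi$ is used — a homeomorphism has no derivative at a fixed point and hence no canonical blow-up — which is why this statement (and Theorem~\ref{thm:non_contractible_torus}) is restricted to $\Diff_0(\mathbb{T}^2,dx\wedge dy)$; in contrast to the genus $\ge 2$ case (Proposition~\ref{prop:non_contractible_technical}), the homeomorphism version cannot be recovered by $C^0$-approximation, since $\chi(\mathbb{T}^2)=0$ permits $C^0$-limits of fixed-point-free smooth diffeomorphisms, so the diffeomorphism statement would supply nothing for the approximants. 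A secondary point is that $\tilde\phi$ may be degenerate, so Proposition~\ref{prop:pfh_nonvanishing} should be applied to nondegenerate Hamiltonian perturbations supported away from $\bar D$ (which descend to nondegenerate perturbations of $\phi$ with a contractible fixed point); letting such perturbations tend to zero, the uniform period bound $q$ together with the fact that the set of non-contractible loops is $C^0$-closed in $\mathbb{T}^2$ produces the desired periodic point of $\phi$ in the limit.
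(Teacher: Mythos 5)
Your proposal is correct and takes the same route as the paper: the paper's proof of this proposition is two sentences — reduce to a $\Phi$-fixed contractible fixed point and run ``the same blow-up argument as in the genus $\geq 2$ case'' — and your write-up simply unpacks that deferred argument (Lemma~\ref{lem:blowup_rotation}, the choice of blow-up area so that $\lambda c = p/q$, Lemma~\ref{lem:pfh_nonvanishing} with $g=1$, $d=q$, and the descent of the non-contractible orbit outside the invariant disk) correctly and in full. Your two side remarks are also on target: the edge case $p/q=c$ (where the paper's $B>A$ degenerates to $B=A$ and one should just apply Lemma~\ref{lem:pfh_nonvanishing} directly without blowing up), and the observation that the $C^0$-approximation trick used in genus $\geq 2$ fails for the torus because fixed points need not persist, which is exactly why the statement is restricted to diffeomorphisms.
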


\begin{rem}
We cannot extend Proposition~\ref{prop:non_contractible_torus_technical} to homeomorphisms since the blow-up argument requires the map to be differentiable, and it is not clear that a torus homeomorphism with a fixed point can be approximated by diffeomorphisms with fixed points and the same rotation vector. 
\end{rem}

Theorem~\ref{thm:non_contractible} and Theorem~\ref{thm:non_contractible_torus} from the introduction respectively follow from Proposition~\ref{prop:non_contractible_technical} and Proposition~\ref{prop:non_contractible_torus_technical}. We now outline the plan for the rest of the section. Section~\ref{subsec:main_proof} proves Theorems~\ref{thm:main} and \ref{thm:extension} assuming Theorem~\ref{thm:non_contractible}. Section~\ref{subsec:rotation_vectors_proof} proves Proposition~\ref{prop:non_contractible_technical}. Section \ref{subsec:torus_proof} proves Proposition~\ref{prop:non_contractible_torus_technical}. 

\subsection{Existence of infinitely many periodic points} \label{subsec:main_proof}

We prove Theorems~\ref{thm:main} and \ref{thm:extension} using Theorem~\ref{thm:non_contractible}. We assume that $\Sigma$ is a closed surface of genus $\geq 2$, since we can reduce to this case by collapsing the boundary components. 

\subsubsection{Proof of Theorem~\ref{thm:main}} Fix any $\phi \in \Homeo_0(\Sigma, \omega)$ with rational rotation direction. Le Calvez \cite{lecalvez_ham} showed any Hamiltonian homeomorphism on a surface of genus $\geq 1$ is either the identity or has periodic points of unbounded minimal period. Therefore we only consider the case where $\phi$ is not Hamiltonian, in which case it has a non-contractible periodic point by Theorem~\ref{thm:non_contractible}. The arguments from \cite[Section $4$]{lecalvez} then show that it has periodic points of unbounded minimal period. 

Here is a high-level outline of \cite[Section $4$]{lecalvez}. Write $\widetilde{\phi}$ for the lift of $\phi$ to the universal cover $\widetilde{\Sigma}$ commuting with the covering translations. The non-contractible periodic point $p$, which we assume to have minimal period $k$, lifts to a point $\widetilde{p}$ such that $\widetilde{\phi}^k(\widetilde{p}) = T \cdot \widetilde{p}$ for some $T \in \pi_1(\Sigma)$. Pass to the annular cover $\widetilde{\Sigma} / T$ and compactify to produce a homeomorphism $\hat{\phi}$ of the closed strip $[0,1] \times \mathbb{R}$ with rotation interval containing $[0, 1/k]$. Le Calvez's refinement of the Poincar\'e--Birkhoff--Franks theorem \cite[Theorem $2.4$]{lecalvez} then shows that either $\phi$ has periodic points of unbounded minimal period or $\hat{\phi}$ does not satisfy the intersection property. In this latter case, a forcing argument is used to produce periodic points of unbounded minimal period regardless. 

\subsubsection{Proof of Theorem~\ref{thm:extension}} Fix a map $\phi \in \Homeo_0(\Sigma, \mu)$, where $\mu$ is a Borel probability measure of full support with $\mu(\partial\Sigma) = 0$ and $\mathcal{F}(\phi, \mu)$ is a real multiple of a rational class. We may assume without loss of generality that $\Sigma$ is closed by collapsing the boundary.

There exists $t \in [0,1]$ and a unique decomposition (see \cite{johnson_atomic}) 
$$\mu = t\mu_0 + (1 - t)\mu_1$$ with the following properties. The measures $\mu_0$ and $\mu_1$ are Borel probability measures, $\mu_0$ has no atoms, $\mu_1$ is purely atomic, and they are mutually ``S-singular''. This means that for any Borel set $E \subset \Sigma$, 
$$\mu_0(E) = \sup\{\mu_0(E \cap F)\,|\,\mu_1(F) = 0\},\qquad \mu_1(E) = \sup\{\mu_1(E \cap F)\,|\,\mu_0(F) = 0\}.$$

The decomposition $\mu = t\phi_*\mu_0 + (1 - t)\phi_*\mu_1$ also satisfies these properties, so the uniqueness property implies that both $\mu_0$ and $\mu_1$ are $\phi$-invariant. 

Since $\mu_1$ is $\phi$-invariant, each of its atoms are periodic points. If $\mu_1$ has infinite support, then there are infinitely many periodic points. If $\mu_1$ has nonzero rotation vector, then $\phi$ has a non-contractible periodic point, and therefore has periodic points of unbounded minimal period by the argument of Le Calvez mentioned above. If $\mu_1$ has finite support and zero rotation number, then $\mu_0$ has full support and rotation number proportional to a rational class. Since it is atomless and has full support, \cite[Theorem $2_1$]{oxtoby_ulam} shows it is homeomorphic to a smooth area measure, so $\phi$ is conjugate to an area-preserving homeomorphism with rational rotation direction, and we apply Theorem~\ref{thm:main}. 

\subsection{Proof of Proposition~\ref{prop:non_contractible_technical}} \label{subsec:rotation_vectors_proof}

\subsubsection{Information from PFH} The following lemma records the relevant information needed from Proposition~\ref{prop:pfh_nonvanishing}. We only state and prove it for diffeomorphisms, but remark that it also holds for homeomorphisms by an approximation argument.

\begin{lem} \label{lem:pfh_nonvanishing}
Fix a closed surface $\Sigma$ of genus $g$. Fix $\phi \in \Diff_0(\Sigma, \omega)$ and an identity isotopy $\Phi$, and assume $\mathcal{F}(\Phi) \in H_1(\Sigma; \mathbb{Q})$. Let $d$ be the smallest integer greater than $\max(2g - 2, 0)$ such that
$$(d + 1 - g) \cdot \mathcal{F}(\Phi) \in H_1(\Sigma; \bZ).$$

Then there exists a set of simple periodic orbits $\{S_i\}_{i=1}^N$ of periods $\{k_i\}_{i=1}^N$ and a set of positive integers $\{m_i\}_{i=1}^N$ such that 
\begin{equation} \label{eq:pfh_nonvanishing} \sum_{i=1}^N m_i k_i = d,\qquad \sum_{i=1}^N m_i k_i \mathcal{F}(\Phi, S_i) = (d + 1 - g) \cdot \mathcal{F}(\Phi).\end{equation}
\end{lem}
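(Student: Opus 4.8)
The plan is to deduce the claimed orbits directly from the PFH non-vanishing theorem (Proposition~\ref{prop:pfh_nonvanishing}), applied to $\phi$ and a suitable class $\Gamma \in H_1(M_\phi;\bZ)$, and then to translate the resulting PFH generator into periodic-orbit data using the dictionary \eqref{eq:mt_rotation_vectors}. First I would record that $d$ is well defined: since $\mathcal{F}(\Phi)$ is rational, the integers $m$ with $m\,\mathcal{F}(\Phi)\in H_1(\Sigma;\bZ)$ form a nonzero subgroup $\ell\bZ\subseteq\bZ$, and infinitely many integers exceeding $\max(2g-2,0)$ are congruent to $g-1$ modulo $\ell$; take $d$ to be the least such.

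Next I would pin down the class $\Gamma$ required by \eqref{eq:gammad}. Using $\Phi$ to fix the identification $\eta\colon \mathbb{T}\times\Sigma \xrightarrow{\ \sim\ } M_\phi$ of \eqref{eq:mt_iso}, write $H_1(M_\phi;\bZ) \cong \bZ[\mathbb{T}]\oplus H_1(\Sigma;\bZ)$, with the first projection given by intersection with the fiber. Lemma~\ref{lem:rational_pd} and Lemma~\ref{lem:asymptotic_cycle_computation} together show that $(\int_\Sigma\omega)^{-1}[\omega_\phi]$ is Poincaré dual to $\cC(\phi)$, which $\eta$ carries to $[\mathbb{T}]+\mathcal{F}(\Phi)$. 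Since $\eta$ respects the projections to $\mathbb{T}$, the vertical bundle $V=\ker(dt)$ pulls back to $\operatorname{pr}_\Sigma^* T\Sigma$, so $c_1(V)$ pulls back to $(2-2g)$ times the generator of $H^2(\Sigma;\bZ)$ pulled back to $\mathbb{T}\times\Sigma$, which is Poincaré dual to $(2-2g)[\mathbb{T}]$. Hence $\Gamma$ corresponds under $\eta$ to
\[ (d+1-g)\big([\mathbb{T}]+\mathcal{F}(\Phi)\big) - (1-g)[\mathbb{T}] \;=\; d[\mathbb{T}] + (d+1-g)\,\mathcal{F}(\Phi), \]
which is an integral class precisely by the defining property of $d$ (and it is nonzero since $d\ge 1$, while $d>\max(2g-2,0)$ by construction). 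So Proposition~\ref{prop:pfh_nonvanishing} applies and $\PFH(\phi,\Gamma)\ne 0$ over $\bZ/2$.

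Finally I would extract the orbits. When $\phi$ is nondegenerate, $\PFH(\phi,\Gamma)\ne0$ forces $\PFC_*(\phi,\Gamma)\ne0$, so there is a PFH generator $\{(\gamma_i,m_i)\}_{i=1}^N$ with $\sum_i m_i[\gamma_i]=\Gamma$; each embedded $\gamma_i$ is the suspension of a simple periodic orbit $S_i$ of minimal period $k_i\ge 1$, and by \eqref{eq:mt_rotation_vectors} its class is $k_i[\mathbb{T}] + k_i\mathcal{F}(\Phi,S_i)$, so reading off the two components of $\Gamma$ gives \eqref{eq:pfh_nonvanishing}. For degenerate $\phi$ I would approximate by nondegenerate Hamiltonian perturbations $\phi_n\to\phi$ with identity isotopies $\Phi_n\to\Phi$; these have $\mathcal{F}(\Phi_n)=\mathcal{F}(\Phi)$, hence the same $d$ and $\Gamma$, so the previous case produces orbit data with $\sum_i m_i^{(n)}k_i^{(n)}=d$. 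Because this total period is the fixed number $d$, the periods $k_i^{(n)}$ and multiplicities $m_i^{(n)}$ are bounded by $d$, and each loop $\gamma_{S_i^{(n)}}$ is a concatenation of at most $d$ arcs of uniformly bounded length (as the isotopies $\Phi_n$ converge), so its homology class lies in a finite subset of $H_1(\Sigma;\bZ)$; passing to a subsequence with all this data constant, the orbits $S_i^{(n)}$ converge in Hausdorff distance to periodic orbits of $\phi$ of period dividing $k_i$, and absorbing any drop of period (or coincidence of limits) into the multiplicities recovers \eqref{eq:pfh_nonvanishing} for $\phi$.

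The bulk of the work is the middle step: correctly identifying $\Gamma$ and checking it is integral, where the asymptotic-cycle computations of Section~\ref{sec:asymptotic_cycles}, the Poincaré dual of $[\omega_\phi]$, and the Euler class of the vertical bundle all have to combine to give exactly $d[\mathbb{T}] + (d+1-g)\mathcal{F}(\Phi)$. With that in hand the nondegenerate case is immediate, and the only remaining subtlety is the compactness argument for degenerate maps — in particular handling the possible collapse of period when orbits of the approximants degenerate in the limit.
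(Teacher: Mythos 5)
Your proposal is correct and follows essentially the same route as the paper: identify $\Gamma = d[\mathbb{T}] + (d+1-g)\mathcal{F}(\Phi)$ as the class solving \eqref{eq:gammad} via Lemmas~\ref{lem:asymptotic_cycle_computation} and \ref{lem:rational_pd}, invoke Proposition~\ref{prop:pfh_nonvanishing} to get a PFH generator, and read off \eqref{eq:pfh_nonvanishing} from \eqref{eq:mt_rotation_vectors}. You spell out a few steps the paper leaves implicit (well-definedness of $d$, the $c_1(V)$ computation, and the compactness argument passing from nondegenerate approximants to a possibly degenerate $\phi$), but the strategy and key computations are the same.
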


\begin{proof}
Using Lemmas~\ref{lem:asymptotic_cycle_computation} and \ref{lem:rational_pd}, we compute
$$\eta^*[\omega_\phi] = \Big(\int_\Sigma \omega\Big) \cdot \text{PD}([\mathbb{T}] + \mathcal{F}(\Phi)) \in H^2(\mathbb{T} \times \Sigma; \bR).$$

The class
\begin{equation*} \Gamma = d[\mathbb{T}] + (d + 1 - g)\mathcal{F}(\Phi). \end{equation*}
solves \eqref{eq:gammad}. By Proposition~\ref{prop:pfh_nonvanishing}, there exists an orbit set $\Theta = \{(\gamma_i, m_i)\}$ such that $\sum_i m_i[\gamma_i] = \Gamma$. For each $i$, let $S_i$ be the simple periodic orbit of $\phi$ corresponding to $\gamma_i$. Sum up the homology class computation \eqref{eq:mt_rotation_vectors} over all $i$ to conclude 
$$d[\mathbb{T}] + (d + 1 - g)\mathcal{F}(\Phi) = \sum_i m_ik_i([\mathbb{T}] + \mathcal{F}(\Phi, S_i)).$$
This identity implies \eqref{eq:pfh_nonvanishing}. 
\end{proof}

\subsubsection{Blow-up} Fix a closed surface $\Sigma$ of genus $\geq 2$, a smooth area form $\omega$ of area $A$, and a diffeomorphism $\phi \in \Diff_0(\Sigma, \omega)$. Suppose that $\phi$ has a contractible fixed point $p$. Choose an identity isotopy $\Phi$ which fixes $p$ (one always exists, see \cite[Proposition $9$]{lrhs16}). We give a precise account here of how to blow up the fixed point $p$ and cap it with a disk of any prescribed area. 

Fix polar coordinates $(r, \theta)$ on $\bR^2$, and for any $s > 0$ denote by $D_s := \{0 \leq r < s\}$ and $A_s := D_s \setminus \{0\}$ the open disk and punctured disk, respectively, of radius $s$ centered at the origin. Write $\dot\Sigma := \Sigma\,\setminus\,\{p\}$. For positive $\delta \ll 1$, there is a symplectic embedding $\iota: (A_\delta, r dr \wedge d\theta) \hookrightarrow (\Sigma, \omega)$ which is a diffeomorphism onto a punctured neighborhood of $p$. Next, fix a parameter $B > A$, which will be the area of the capped surface, and fix $s_1 > s_0 > 0$ such that $B - A = \pi s_0^2$ and $s_1^2 - s_0^2 = \delta^2$. Then the map $(r, \theta) \mapsto (\sqrt{s_0^2 + r^2}, \theta)$ is a symplectic embedding $\tau: (A_\delta, r dr \wedge d\theta) \hookrightarrow (D_{s_1}, dx \wedge dy)$ which identifies $A_\delta$ with the annulus $\{s_0 < r < s_1\}$. The surface $\widehat{\Sigma}$ is the surface constructed by gluing $\dot\Sigma$ and $D_{s_1}$ along $A_\delta$, using the symplectic embeddings $\iota$ and $\tau$. The glued surface $\widehat{\Sigma}$ has a symplectic form $\hat{\omega}$ restricting to $\omega$ on $\Sigma$ and $dx \wedge dy$ on $D_{s_1}$. The area of $\widehat{\Sigma}$ is $A + \pi s_0^2 = B$ as desired. 

The isotopy $\Phi = \{\phi^t\}_{t \in [0,1]}$ extends to an identity isotopy $\hat{\Phi}: [0,1] \to \Diff(\widehat{\Sigma}, \hat{\omega})$. Since the isotopy fixes $p$, it coincides with a Hamiltonian isotopy in a neighborhood of $p$; extend the generating Hamiltonian to $\widehat{\Sigma}$ to produce the desired extension. The following lemma computes the rotation vector of the extension. 

\begin{lem} \label{lem:blowup_rotation}
Fix a closed surface $\Sigma$, an area form $\omega$, and a point $p \in \Sigma$. Let $\Phi: [0,1] \to \Diff_0(\Sigma, \omega)$ be an identity isotopy such that $\Phi(t)$ fixes $p \in \Sigma$ for each $t$. Fix any extension $\hat{\Phi}$ of $\Phi$ to the blown-up surface $(\widehat{\Sigma}, \hat{\omega})$, and let $\pi: \hat{\Sigma} \to \Sigma$ be the blow-down map. Then the rotation vector of $\hat{\Phi}$ satisfies the identity
\begin{equation} \label{eq:blowup_rotation}
\Big(\int_{\widehat{\Sigma}}\hat{\omega}\Big) \cdot \pi_*\mathcal{F}(\hat{\Phi}) = \Big(\int_\Sigma \omega\Big) \cdot \mathcal{F}(\Phi).
\end{equation}
\end{lem}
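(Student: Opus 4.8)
The plan is to pass from the statement about normalized rotation vectors in \eqref{eq:blowup_rotation} to one about \emph{unnormalized} rotation vectors, and then to exploit that the blow-down map $\pi$ intertwines the two isotopies. For a $\phi$-invariant finite Borel measure $\nu$ write $\mathcal{F}(\Phi,\nu)$ for the corresponding rotation vector; this depends linearly on $\nu$ since it is defined by an integral. As $\mathcal{F}(\Phi) = \mathcal{F}\big(\Phi, (\int_\Sigma\omega)^{-1}\omega\big)$ and similarly on $\widehat\Sigma$, and $\pi_*\colon H_1(\widehat\Sigma;\bR) \to H_1(\Sigma;\bR)$ is linear, the identity \eqref{eq:blowup_rotation} is equivalent to
\[ \pi_*\,\mathcal{F}(\hat\Phi,\hat\omega) = \mathcal{F}(\Phi,\omega), \]
where $\hat\omega$ and $\omega$ are now regarded as finite measures on $\widehat\Sigma$ and $\Sigma$ respectively.

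Next I would record the two facts about the blow-up we need. By construction $\pi$ is continuous, restricts to a homeomorphism $\dot\Sigma \to \Sigma\setminus\{p\}$ taking $\hat\omega$ to $\omega$, and collapses the capping disk $\overline{D_{s_0}} = \widehat\Sigma\setminus\dot\Sigma$ (of $\hat\omega$-area $B - A$) to the point $p$; consequently $\pi_*\hat\omega = \omega + (B-A)\,\delta_p$, where $\delta_p$ is the unit point mass at $p$. Moreover $\hat\Phi$ extends $\Phi$ in the precise sense that $\pi\circ\hat\phi_t = \phi_t\circ\pi$ for every $t \in [0,1]$: each $\hat\phi_t$ agrees with $\phi_t$ on $\dot\Sigma$ and, being area-preserving, must preserve the complementary disk $\overline{D_{s_0}}$, which $\pi$ sends to the $\phi_t$-fixed point $p$.

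Finally I would run the functoriality computation. Fix $f\colon\Sigma\to\mathbb{T}$ and let $g\colon\Sigma\to\bR$ be the real-valued lift of $f\circ\phi - f$ determined by $\Phi$, so that $\langle\mathcal{F}(\Phi,\nu),f\rangle = \int_\Sigma g\,d\nu$. Using $\pi\circ\hat\phi_t = \phi_t\circ\pi$ for all $t$, the real-valued lift of $(f\circ\pi)\circ\hat\phi - (f\circ\pi)$ determined by $\hat\Phi$ is exactly $g\circ\pi$; hence for any $\hat\phi$-invariant finite measure $\nu$ on $\widehat\Sigma$,
\[ \langle\pi_*\mathcal{F}(\hat\Phi,\nu),f\rangle = \langle\mathcal{F}(\hat\Phi,\nu),f\circ\pi\rangle = \int_{\widehat\Sigma} g\circ\pi\,d\nu = \int_\Sigma g\,d(\pi_*\nu) = \langle\mathcal{F}(\Phi,\pi_*\nu),f\rangle. \]
Taking $\nu = \hat\omega$ and inserting $\pi_*\hat\omega = \omega + (B-A)\delta_p$ gives $\langle\pi_*\mathcal{F}(\hat\Phi,\hat\omega),f\rangle = \langle\mathcal{F}(\Phi,\omega),f\rangle + (B-A)\,g(p)$. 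But $g(p) = 0$, because $\phi_t(p) = p$ for all $t$ makes the path $t\mapsto f(\phi_t(p))$ constant, so $\mathcal{F}(\Phi,\delta_p) = 0$. Since $f\in[\Sigma,\mathbb{T}]$ was arbitrary and the pairing of $H_1(\Sigma;\bR)$ with $[\Sigma,\mathbb{T}]$ is non-degenerate, this yields $\pi_*\mathcal{F}(\hat\Phi,\hat\omega) = \mathcal{F}(\Phi,\omega)$, hence \eqref{eq:blowup_rotation}. I expect the only delicate point to be justifying the two structural properties of $\hat\Phi$ in the second paragraph — the equivariance of the blow-down at all times and the invariance of the capping disk — which are built into the construction of the extension via the local Hamiltonian near $p$; the remainder is bookkeeping with normalized versus unnormalized rotation vectors, the pushforward of the area measure, and the vanishing of the rotation vector of a point mass at a fixed point of the isotopy.
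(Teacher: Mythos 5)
Your proof is correct and is essentially the same as the paper's: both reduce to the observation that the lift $\hat g$ of $\hat f\circ\hat\phi - \hat f$ induced by $\hat\Phi$ equals $g\circ\pi$ (equivalently, $g$ on $\dot\Sigma$ and $0$ on the invariant capping disk), and then integrate against $\hat\omega$. One small quibble: the invariance of $\overline{D_{s_0}}$ follows from $\hat\phi_t|_{\dot\Sigma}=\phi_t|_{\dot\Sigma}$ together with $\phi_t(p)=p$, not from area-preservation, which alone would not rule out the disk being moved.
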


\begin{proof}
Fix any $f: \Sigma \to \mathbb{T}$ and set $\hat{f} := f \circ \pi: \widehat{\Sigma} \to \mathbb{T}$. Write $g: \Sigma \to \bR$ and $\hat{g}: \widehat{\Sigma} \to \bR$ for the lifts of $f \circ \phi - f$ and $\hat{f} \circ \hat{\phi} - \hat{f}$ induced by the isotopies $\Phi$ and $\hat{\Phi}$. The function $\hat{f}$ equals $f$ on $\dot\Sigma \subset \widehat{\Sigma}$ and is constant on its complement. Both sets are $\hat{\Phi}$-invariant, so $\hat{g} = g$ on $\dot\Sigma$ and $\hat{g} = 0$ elsewhere. We conclude
$$\Big(\int_{\widehat{\Sigma}}\hat{\omega}\Big) \cdot \langle \mathcal{F}(\hat{\Phi}), \hat{f} \rangle = \int_{\widehat{\Sigma}} \hat{g}\,\hat{\omega} = \int_{\dot\Sigma} g\,\omega = \Big(\int_\Sigma \omega\Big) \cdot \langle \mathcal{F}(\Phi), f \rangle$$
which implies \eqref{eq:blowup_rotation}. 
\end{proof}

\subsubsection{Proof for diffeomorphisms} Fix a closed surface $\Sigma$ of genus $\geq 2$, a smooth area form $\omega$ of area $A$, and $\phi \in \Diff_0(\Sigma, \omega)$ such that $\mathcal{F}(\phi) = c \cdot h$ where $c > 0$ and $h \in H_1(\Sigma; \mathbb{Z})$. Choose a rational number $p/q \in (0, c]$ with $q > g - 1$. Fix $B > A$ such that $A/B = p/q \cdot c^{-1}$. Blow up the fixed point $p$ to get a surface $\widehat{\Sigma}$ of area $B$, and extend the identity isotopy $\Phi$ to an identity isotopy $\hat{\Phi}$ with endpoint $\hat{\phi} \in \Diff_0(\widehat{\Sigma}, \hat{\omega})$. By \eqref{eq:blowup_rotation}, 
$$\pi_*\mathcal{F}(\hat{\phi}) = A/B \cdot \mathcal{F}(\phi) = p/q \cdot h.$$

The map $\pi_*$ is an isomorphism $H_1(\widehat{\Sigma}; \bZ) \simeq H_1(\Sigma; \bZ)$, so we conclude that $q\mathcal{F}(\hat{\phi}) \in H_1(\widehat{\Sigma}; \bZ)$. By Lemma~\ref{lem:pfh_nonvanishing}, $\hat{\phi}$ has a non-contractible periodic point $z$ of minimal period $\leq q + g - 1$. This periodic point must lie in $\dot\Sigma \subset \widehat{\Sigma}$. This is because the complement $\widehat{\Sigma}\,\setminus\,\dot\Sigma$ is a $\hat{\Phi}$-invariant disk, so any periodic point contained in it is contractible. The point $z$ is therefore a non-contractible periodic point of $\phi$ of minimal period $\leq q + g - 1$. 

\subsubsection{Proof for homeomorphisms} We approximate $\phi \in \Homeo_0(\Sigma, \omega)$ by diffeomorphisms with the same rotation vector. Let $\psi \in \Diff_0(\Sigma, \omega)$ be any diffeomorphism such that $\mathcal{F}(\psi) = \mathcal{F}(\phi)$. It follows that $\mathcal{F}(\psi^{-1}\circ\phi) = 0$, so $\psi^{-1} \circ \phi$ lies in $\overline{\operatorname{Ham}}(\Sigma, \omega)$ \cite[Section $6$]{fathi}. Pick any sequence of Hamiltonian diffeomorphisms $h_k \in \operatorname{Ham}(\Sigma, \omega)$ approximating $\psi^{-1}\circ\phi$. The maps $\phi_k := \psi \circ h_k$ converge in the $C^0$ topology to $\phi$ and all have $\mathcal{F}(\phi_k) = \mathcal{F}(\phi)$. By the argument above, for any $q$ such that $p/q \in (0, c]$, each diffeomorphism $\phi_k$ has a non-contractible periodic point $z_k$ with minimal period $\leq q + g - 1$. Any subsequential limit $z$ is a non-contractible periodic point of $\phi$ with minimal period $\leq q + g - 1$.

\subsection{Proof of Proposition~\ref{prop:non_contractible_torus_technical}} \label{subsec:torus_proof} Assume that $\phi \in \Diff_0(\mathbb{T}^2, dx \wedge dy)$ has a fixed point $p \in \mathbb{T}^2$. If it is not $\Phi$-contractible, we are done. If it is $\Phi$-contractible, then we assume without loss of generality that it is fixed by $\Phi$. The same blow-up argument as in the genus $\geq 2$ case proves the proposition. 

\bibliographystyle{alpha}
\bibliography{main}

\end{document}